\newcounter{num}[section]
\newenvironment{theorem}
{\refstepcounter{num}%
\bigskip\noindent\nopagebreak[4]{\bf Theorem~\arabic{section}.\arabic{num}. }\it}
\newenvironment{lemma}
{\refstepcounter{num}%
\bigskip\noindent\nopagebreak[4]{\bf Lemma~\arabic{section}.\arabic{num}. }\it}
\newenvironment{definition}
{\refstepcounter{num}%
\bigskip\noindent\nopagebreak[4]{\bf Definition~\arabic{section}.\arabic{num}. }\it}
\newenvironment{remark}
{\refstepcounter{num}%
\bigskip\noindent\nopagebreak[4]{\bf Remark~\arabic{section}.\arabic{num}. }}
\newenvironment{statement}
{\refstepcounter{num}%
\bigskip\noindent\nopagebreak[4]{\bf Statement~\arabic{section}.\arabic{num}. }\it}
\newcommand{\N}{{\mathbb{N}}}
\newcommand{\LL}{{\mathcal{L}}}
\newcommand{\qvar}{{\mathrm{qvar}}}
\newcommand{\pvar}{{\mathrm{pvar}}}
\newcommand{\Rad}{{\mathrm{Rad}}}
\newcommand{\Ss}{{\mathcal{S}}}
\newcommand{\V}{{\mathrm{V}}}
\newcommand{\qq}{{\mathrm{q}_\omega}}
\newcommand{\uu}{{\mathrm{u}_\omega}}
\newcommand{\A}{{\mathcal{A}}}
\newcommand{\Nbf}{{\mathbf{N}}}
\newcommand{\Qbf}{{\mathbf{Q}}}
\newcommand{\Ubf}{{\mathbf{U}}}
\renewcommand{\c}{{\mathbf{c}}}
\newcommand{\C}{{\mathcal{C}}}
\newcommand{\B}{{\mathcal{B}}}
\newcommand{\om}{{\omega}}
\renewcommand{\t}{{\tau}}
\newcommand{\s}{{\sigma}}
\begin{document}
\title{Algebraic geometry over Boolean algebras in the language with constants}
\author{Artem N. Shevlyakov}

\maketitle

\sloppy
\rm

Omsk 644099, Pevtsova st. 13, Omsk Branch of Sobolev Institute of Mathematics, Siberian Branch of the Russian Academy of Sciences, e-mail: a\_shevl@mail.ru, тел. +79081196685.

MSC 06E05

{\bf Abstract.} We study equations over boolean algebras with distinguished elements. We prove the criteria, when a boolean algebra is equationally Noetherian, weakly equationally Noetherian, $\mathbf{q}_\omega$-compact or $\mathbf{u}_\omega$-compact. Also we solve the problem of geometric equivalence in the class of boolean algebras with distinguished elements.

{\bf Keywords:} boolean algebras, equations, geometric equivalence.

\section*{Introduction}
\setcounter{section}{0}
For an arbitrary algebraic structure (algebra, for shortness) in a language $\LL$ one can define the notion of an equation as a an atomic formula of the language $\LL$. Further, the definition of a solution of an equation can be naturally given. Therefore, it is posed the classification problem of algebraic sets (i.e. sets defined by systems of equations) over an algebra $\A$. One can also define the useful notion of a coordinate algebra over an algebra $\A$. The coordinate algebra is an analog of a coordinate ring in commutative algebra and defines an algebraic set up to isomorphism. In papers by E.~Daniyarova, A.~Miasnikov, V.~Remeslennikov~\cite{uniTh,uniTh_new} it were proved two so-called Unifying Theorems which classify coordinate algebras over an algebra $\A$ with seven equivalent approaches.

The first Unifying Theorem describes the class of all coordinate algebras over a given algebra $\A$, but the second one deals with coordinate algebras of irreducible algebraic sets over $\A$. The unique constraint in the both Unifying Theorems is the Noetherian property of algebra $\A$ (i.e. any system of equation over $\A$ is equivalent to its finite subsystem).

In~\cite{uniTh_III} it was defined three classes of algebras which generalize the class of equationally Noetherian algebras. These classes are called weakly equationally Noetherian, $\uu$-compact and  $\qq$-compact algebras and denoted by $\Nbf^\prime,\Ubf,\Qbf$ respectively. 

Each of the classes $\Nbf^\prime,\Ubf,\Qbf$ inherits some properties of the class of equationally Noetherian algebras $\Nbf$. For example, any algebraic set over an algebra $\A\in\Nbf^\prime$ is defined by a finite system of equations. For every $\uu$-compact algebra $\A$ the both Unifying Theorems remain true, whereas for a $\qq$-compact algebras only the first Unifying Theorem holds.

Thus, any application of Unifying Theorems means the
preliminary solution of the next problem.
\medskip

\noindent{\bf Problem.} Find the classes $\Nbf,\Nbf^\prime,\Ubf,\Qbf$ which contain the given algebra  $\A$.

\medskip

It is known that all classes $\Nbf,\Nbf^\prime,\Qbf,\Ubf$ are pairwise distinct. In~\cite{plotkin} it was defined $\qq$-compact but not equationally Noetherian group. In~\cite{kotov} it was shown that the classes $\Nbf,\Nbf^\prime,\Ubf,\Qbf$ of algebras in the language $\LL=\{f_i^{(1)}|i\in\N\}$ ($f_i^{(1)}$ is an unary function) are pairwise distinct. In~\cite{at_service} we defined the series of semilattices in the language $\LL=\{\wedge\}\cup\{\c_i|i\in\N\}$ with countable many constants. The obtained series shows that the classes $\Nbf,\Nbf^\prime,\Ubf,\Qbf$ of semilattices in the language $\LL$ are pairwise distinct.

An interesting result devoted to the problem above was obtained in~\cite{kotov} and it needs the next  definition.

A system of equations $\Ss$ over an algebra $\A$ is called an $E_k$-{\it system} ($k\in\N$) if $\Ss$ has exactly $k$ solutions in $\A$, however the set of solutions of any finite subsystem $\Ss_0\subseteq\Ss$ is infinite.

\medskip

\noindent{\bf Theorem }\cite{kotov}.
{\it 
Let $\A$ be a $\qq$-compact ($\uu$-compact) algebra, then for any $k\in\{0,1\}$ ($k\in\N$) there does not exist any $E_k$-system over $\A$.
}

\medskip

There are algebras, where the conditions of the theorem above become sufficient for an algebra $\A$ to be $\qq$-compact ($\uu$-compact). In~\cite{for_pinus} it was proved for linearly ordered lattices in the language $\{\wedge,\vee\}\cup\{\c_i|i\in I\}$ extended by constants. In the current paper we prove it for boolean algebras in the language $\LL=\{\vee,\cdot,\bar{ },0,1\}\cup\{\c_i|i\in I\}$, extended by an arbitrary set of constants (the functions $\vee,\cdot,\bar{ }$ means the disjunction, conjunction and negation respectively). 


Let us formulate the main results of the current paper.

\medskip

\noindent{\bf Theorem~\ref{th:q-compactness}} {\it A boolean $\C$-algebra $\B$ is $\qq$-compact iff there are not $E_0$- and $E_1$-system over $\B$.  
}

\medskip

\noindent{\bf Theorem~\ref{th:u-compactness}} {\it A boolean $\C$-algebra $\B$ is $\uu$-compact iff there are not $E_k$-systems over $\B$ for any $k\in\N$.  
}

\medskip

\noindent{\bf Theorem~\ref{th:Noeth_criterion}} {\it A boolean $\C$-algebra $\B$ is equationally Noetherian iff the subalgebra $\C$ generated by the constants of $\LL$ is finite.
}

\medskip

\noindent{\bf Theorem~\ref{th:weakly_Noetherian_criterion}} {\it A boolean $\C$-algebra $\B$ is weakly equationally Noetherian iff the algebra $\C$ of constants is complete in $\B$, i.e. any set of elements $\{\c_j|j\in J\}\subseteq\C$ has the infimum in $\B$ and the infimum  belongs to $\C$.
}

\medskip

Remark that the analog of Criterion~\ref{th:Noeth_criterion} holds for semilattices of the language  $\{\wedge\}\cup\{\c_i|i\in I\}$ extended by an infinite set of constants (see~\cite{at_service}).  

\bigskip

Let us consider the another generalization of the Noetherian property. An algebra $\A$ is  \textit{consistently Noetherian} if any consistent system of equations over $\A$ is equivalent to its finite subsystem. This definition does not equivalent to the Noetherian property in general, since there exists a semilattice of the language $\{\wedge\}\cup\{\c_i|i\in I\}$ which is consistently Noetherian, but there is an inconsistent system of equations $\Ss$, whose all finite subsystems are consistent (see~\cite{at_service}). 

However for boolean algebras of the language $\LL$ we have

\medskip

\noindent {\bf Theorem~\ref{th:noeth_and_noeth_are_equivalent}} {\it If a boolean $\C$-algebra $\B$ is consistently Noetherian then it is equationally Noetherian.

}

\bigskip

Paragraph~\ref{sec:geometric_equivalence} devoted to the problem of geometric equivalence of boolean algebras in the language $\LL$. By definition, boolean algebras $\B_1,\B_2$ of the language $\LL$ are geometrically equivalent if for any system of equations $\Ss$ the coordinate algebras over $\B_1$ and $\B_2$ are isomorphic. It means that the description of coordinate algebras over an algebra $\B_1$ automatically implies the corresponding description over any algebra $\B_2$ which is geometrically equivalent to $\B_1$. 

The problem of geometric equivalence was posed in~\cite{plotkin}. In~\cite{AGoverGroups_2} this problem was solved for equationally Noetherian groups. Theorem~\ref{th:geom_equiv} of the current paper contains a criterion for a pair of boolean algebras of the language $\LL$ to be geometrically equivalent. 

As it follows from~\cite{AGoverGroups_2}, the geometric equivalence of algebras $\B_1,\B_2$ is highly connected to the universal classes generated by the algebras $\B_1,\B_2$. For instance, the equality of the pre-varieties generated by $\B_1,\B_2$ is equivalent to their geometric equivalence. Thus, Theorem~\ref{th:geom_equivalence_all} contains the statements about geometric equivalence and the universal classes generated by boolean algebras of the language $\LL$.

\section{Boolean algebras}

Following~\cite{goncharov,monk}, let us give the main properties of boolean algebras.

Let $\LL_0=\{\vee^{(2)},\cdot^{(2)},\bar{ }^{\ (1)},0,1\}$ be a language with two binary functions $\vee,\cdot$, one unary $\bar{ }$ and constants $0,1$. The functions $\vee,\cdot,\bar{ }$ are called the {\it disjunction, conjunction and negation} respectively. We also use the symbol $\bigwedge$ for the conjunction. For example, the denotation $\bigwedge_{i\in I}b_i$ means the conjunction of the elements $b_i$ with indexes from the set $I$.

The axioms and main identities of boolean algebras can be found in~\cite{goncharov,monk}.

Over a boolean algebra one can define a partial order by
$$x\leq y\Longleftrightarrow x y=x.$$

A boolean algebra $\B$ is {\it complete}  if any set $M\subseteq\B$ has the infimum $\inf M$ and supremum $\sup M$ with respect to the partial order $\leq$. The existence of the infimum for any subset $\{b_i|i\in I\}\subseteq\B$ implies the existence of the supremum of every subset $\{b_i|i\in I\}$ and vise versa. Further, we shall say that a subalgebra $\C\subseteq\B$ is {\it complete in} $\B$ if for an arbitrary set $M\subseteq\C$ one can calculate the infimum $\inf M$ and supremum $\sup M$ in the algebra $\B$, and $\inf M,\sup M\in\C$.  

Let us extend the language $\LL_0$ by infinite set of constants $$\LL=\LL_0\cup\{\c_i|i\in I\}.$$ Any boolean algebra in the language $\LL$ is called a {\it $\C$-algebra}, where $\C$ is the boolean subalgebra generated by the constants $\{\c_i|i\in I\}$.

\section{Basic notions of algebraic geometry}
\label{sec:Foundations_of_Alg_Geom}
In this paragraph we give the main definition of universal algebraic geometry. For more details, see~\cite{uniTh,uniTh_new,uniTh_III}. 

All definition below maybe given for an arbitrary algebra in a language with no predicates. However we give them just for boolean $\C$-algebras.

Let $X=\{x_1,x_2,\ldots,x_n\}$ be a finite set of variables. {\it An equation (or $\C$-equation)} over a boolean $\C$-algebra $\B$ is an atomic formula $\t(X)=\s(X)$ in the language $\LL$. {\it A system of equations (system, for shortness)} is an arbitrary set of equations. 

\begin{remark}
In our paper we always consider systems which depend on a finite set of variables. Secondly, we shall consider the expressions $t(X)\leq s(X)$ as equations, since 
\[
t(X)\leq s(X)\Leftrightarrow t(X)s(X)=t(X).
\]
\end{remark}

The notion of a~\textit{solution set} of an equation $\t(X)=\s(X)$ (a system $\Ss$) over a boolean algebra $\B$ is defined by the natural way and denoted by $\V_\B(\t(X)=\s(X))$ ($\V_\B(\Ss)$). If a system has no solution it is called~\textit{inconsistent}.

A set $Y\subseteq \B^n$ is~\textit{algebraic} over $\B$ if there exists a system $\Ss$ in variables $x_1,x_2,\ldots,x_n$ such that $Y=\V_\B(\Ss)$. 

\textit{The radical} $\Rad_{\B}(\Ss)$ of a system $\Ss$ over a $\C$-algebra $\B$ is the set of all equations $\t(X)=\s(X)$ such that $\V_{\B}(\Ss)\subseteq\V_{\B}(\t(X)=\s(X))$. Obviously, the radical of an inconsistent system contains all equations over a boolean $\C$-algebra $\B$.

Two systems of equations in variables $X$ are~\textit{equivalent} over a boolean $\C$-algebra $\B$, if they have the same solution sets over $\B$. The equivalence of system over a boolean algebra $\B$ is denoted by the symbol $\sim_\B$. If two systems are equivalent over any boolean $\C$-algebra we shall use the symbol $\sim$.

A boolean $\C$-algebra $\B$ is~\textit{equationally Noetherian} if for any system $\Ss$ (even for inconsistent) there exists a finite subsystem $\Ss^\prime\subseteq\Ss$ which is equivalent to $\Ss$ over $\B$. The class of all equational Noetherian boolean $\C$-algebras is denoted by $\mathbf{N}$.

The following four notions generalize the equational Noetherian property.

A boolean $\C$-algebra is
\begin{enumerate}
\item \textit{consistently Noetherian} if for any  system $\Ss$ which is consistent over $\B$ there exists a finite subsystem $\Ss^\prime\subseteq\Ss$ with $\Ss^\prime\sim_{\B}\Ss$. The class of all consistently Noetherian boolean $\C$-algebras is denoted by $\mathbf{N}_c$.
\item \textit{weakly equationally Noetherian} if for every system $\Ss$ there exists a finite system  $\Ss^\prime$ such that $\Ss^\prime\sim_\B\Ss$ (here we do not suppose $\Ss^\prime$ to be a subsystem of $\Ss$). Denote by $\mathbf{N}^\prime$ the class of all weakly equationally Noetherian boolean $\C$-algebras.
\item \textit{$\qq$-compact} if for any system $\Ss$ and an equation $\t(X)=\s(X)$ such that  $\V_\B(\Ss)\subseteq\V_\B(\tau(X)=\sigma(X))$ there exists a finite subsystem $\Ss^\prime\subseteq\Ss$ with
$\V_\B(\Ss^\prime)\subseteq\V_\B(\tau(X)=\sigma(X))$. The class of all $\qq$-compact boolean $\C$-algebras if denoted by $\Qbf$.
\item \textit{$\uu$-compact} if for any system $\Ss$ and a finite set of equations $\tau_i(X)=\sigma_i(X)$ ($1\leq i\leq m$) such that 
\[
\V_\B(\Ss)\subseteq\bigcup_{\substack{i=1}}^m\V_\B(\tau_i(X)=\sigma_i(X)),
\] 
there exists a finite subsystem $\Ss^\prime\subseteq\Ss$ with 
\[
\V_\B(\Ss^\prime)\subseteq\bigcup_{\substack{i=1}}^m\V_\B(\tau_i(X)=\sigma_i(X)).
\] 
Denote by $\Ubf$ the class of all $\uu$-compact boolean $\C$-algebras.
\end{enumerate}

The following picture shows the inclusions of the classes defined above. Notice that  $\mathbf{Q}\cap\mathbf{N}^\prime=\mathbf{N}$ (see the proof in~\cite{uniTh_III}).

\begin{center}
\begin{picture}(150,100)
\qbezier(50,0)(69,0)(85,15) \qbezier(85,15)(100,31)(100,50)
\qbezier(100,50)(100,69)(85,85) \qbezier(85,85)(69,100)(50,100)
\qbezier(50,100)(31,100)(15,85) \qbezier(15,85)(0,69)(0,50)
\qbezier(0,50)(0,31)(15,15) \qbezier(15,15)(31,0)(50,0)

\qbezier(73,93)(70,90)(65,85) 
\qbezier(65,85)(50,69)(50,50)
\qbezier(50,50)(50,31)(65,15) 
\qbezier(65,15)(71,6)(70,4)

\qbezier(100,5)(114,5)(125,15) \qbezier(125,15)(135,26)(135,40)
\qbezier(135,40)(135,54)(125,65) \qbezier(125,65)(114,75)(100,75)
\qbezier(100,75)(86,75)(75,65) \qbezier(75,65)(65,54)(65,40)
\qbezier(65,40)(65,26)(75,15) \qbezier(75,15)(86,5)(100,5)

\put(23,50){$\Qbf$} \put(53,50){$\Ubf$}\put(83,50){$\Nbf$}
\put(113,50){$\Nbf^\prime$}
\end{picture}

\nopagebreak
\textbf{Figure 1.}
\end{center}

For the further study we have a need in the next simple statement whose proof is omitted.

\begin{lemma}
\label{l:aux}
Let $\Ss$ be a system over a boolean $\C$-algebra $\B$ and $M_1,M_2,\ldots,M_m\subseteq\B^n$ be sets 
\[
\V_\B(\Ss)\subseteq\bigcap_{i=1}^mM_i,
\] 
and for any $M_i$ there exists a finite subsystem $\Ss^\prime_i\subseteq\Ss$ with $\V_\B(\Ss^\prime_i)\subseteq M_i$. Then for the finite subsystem $\Ss^\prime=\bigcup_{i=1}^m\Ss^\prime_i$ it holds
\[
\V_\B(\Ss^\prime)\subseteq\bigcap_{i=1}^mM_i
\] 
\end{lemma}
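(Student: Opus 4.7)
The plan is to exploit the basic monotonicity of solution sets: if a system grows by adding more equations, its solution set can only shrink. I would first observe that by construction $\Ss^\prime = \bigcup_{i=1}^m \Ss^\prime_i$ is a finite subsystem of $\Ss$ (a finite union of finite subsystems is finite), and that $\Ss^\prime_i \subseteq \Ss^\prime$ for each $i=1,\ldots,m$.

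Next I would invoke the monotonicity observation: for any two systems $\Ss_1 \subseteq \Ss_2$ over $\B$, every solution of $\Ss_2$ satisfies in particular every equation of $\Ss_1$, so $\V_\B(\Ss_2) \subseteq \V_\B(\Ss_1)$. Applying this to $\Ss^\prime_i \subseteq \Ss^\prime$ gives $\V_\B(\Ss^\prime) \subseteq \V_\B(\Ss^\prime_i) \subseteq M_i$ for each $i$, where the second inclusion is the hypothesis. Intersecting over $i$ yields $\V_\B(\Ss^\prime) \subseteq \bigcap_{i=1}^m M_i$, which is the desired conclusion.

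There is essentially no obstacle here; the lemma is purely a bookkeeping statement about unions of finite subsystems and the antitone behavior of $\V_\B(\cdot)$ with respect to inclusion of equation sets. The only thing worth emphasizing is that the hypothesis $\V_\B(\Ss) \subseteq \bigcap_i M_i$ is not actually used in the argument, since each finite subsystem $\Ss^\prime_i$ already carries all the information needed; this hypothesis is stated only to match the form in which the lemma will be applied (e.g., verifying the defining conditions of $\qq$- or $\uu$-compactness).
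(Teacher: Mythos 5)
Your proof is correct; the paper explicitly omits the proof of this lemma as a "simple statement," and your argument via the antitone behavior of $\V_\B(\cdot)$ under inclusion of equation sets is exactly the intended routine verification. Your side remark that the hypothesis $\V_\B(\Ss)\subseteq\bigcap_i M_i$ is never used is also accurate.
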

\medskip

Let us give the main definition of our paper.

\begin{definition}\textup{\cite{kotov}}
\label{def:E_k-system}
A system $\Ss$ over a boolean $\C$-algebra $\B$ is called an $E_k$-system if $|\V_\B(\Ss)|=k$, but for any finite subsystem $\Ss^\prime\subseteq\Ss$ it holds $|\V_\B(\Ss^\prime)|=\infty$.  
\end{definition}

\medskip

The next properties of $E_k$-system immediately follows from the definition.

\begin{statement}
Let $\Ss$ be a system over a boolean $\C$-algebra $\B$. Then
\begin{enumerate}
\item if $\Ss$ is not an $E_k$-system and $|\V_\B(\Ss)|=k$, then $\Ss$ is equivalent to its finite subsystem $\Ss^\prime$;
\item in particular, if an inconsistent system $\Ss$ is not an $E_0$-system, there exists a finite inconsistent subsystem $\Ss^\prime\subseteq\Ss$.
\end{enumerate}
\end{statement}

The following theorem was proved in~\cite{kotov} and contains the necessary conditions of $\uu$- and $\qq$-compactness.

\begin{theorem}\textup{\cite{kotov}}
\label{th:kotov}
Let $\B$ be a $\qq$-compact ($\uu$-compact) boolean $\C$-algebra then for $k\in\{0,1\}$ ($k\in\N$) there does not exist an $E_k$-system over $\B$. 
\end{theorem}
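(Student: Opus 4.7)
The overall strategy is contrapositive: given an $E_k$-system $\Ss$ over $\B$, I will exhibit data violating $\qq$-compactness (respectively $\uu$-compactness). The one structural fact I will use throughout is that in a boolean algebra a finite conjunction of equations $\t_i(X)=\s_i(X)$ ($i=1,\ldots,m$) is equivalent to the single equation $\bigvee_i(\t_i\bar{\s}_i \vee \bar{\t}_i\s_i)=0$; in particular, for disjoint tuples $X,Y$ of variables, the condition $X=Y$ is encoded by one equation.

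The case $k=0$ is immediate for both classes. Since $0\neq 1$ in any non-trivial boolean $\C$-algebra, $\V_\B(0=1)=\emptyset$, so $\V_\B(\Ss)=\emptyset\subseteq\V_\B(0=1)$. Then $\qq$-compactness yields a finite $\Ss'\subseteq\Ss$ with $\V_\B(\Ss')=\emptyset$, contradicting the requirement that every finite subsystem of an $E_0$-system have infinitely many solutions.

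For the $\qq$-compact case with $k=1$, suppose $\V_\B(\Ss)=\{\vv\}$. The plan is to duplicate variables. Let $X=(x_1,\ldots,x_n)$ and let $Y=(y_1,\ldots,y_n)$ be a fresh disjoint tuple, and form $\Ss(X)\cup\Ss(Y)$; its solution set in $\B^{2n}$ is the Cartesian product $\V_\B(\Ss)\times\V_\B(\Ss)=\{(\vv,\vv)\}$, which lies inside $\V_\B(e)$ where $e$ is the single equation encoding $X=Y$. Applying $\qq$-compactness returns finite $\Ss',\Ss''\subseteq\Ss$ such that every pair $(\aaa,\b)\in\V_\B(\Ss')\times\V_\B(\Ss'')$ satisfies $\aaa=\b$. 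But both $\V_\B(\Ss')$ and $\V_\B(\Ss'')$ are infinite by the $E_1$-hypothesis, so choosing distinct $\aaa_1,\aaa_2\in\V_\B(\Ss')$ and any $\b\in\V_\B(\Ss'')$ forces $\aaa_1=\b=\aaa_2$, a contradiction.

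For the $\uu$-compact case with arbitrary $k\geq 1$, the same idea scales with the pigeonhole principle: take $k+1$ disjoint copies $\Ss(X^1),\ldots,\Ss(X^{k+1})$. Their joint solution set is $\{\vv_1,\ldots,\vv_k\}^{k+1}$, and since $k+1>k$ every such tuple has two equal entries; hence it lies in $\bigcup_{1\leq a<b\leq k+1}\V_\B(X^a=X^b)$, a finite union of solution sets of single equations. $\uu$-compactness returns finite $\Ss^a\subseteq\Ss$ with $\prod_a\V_\B(\Ss^a)$ inside the same union. Because each $\V_\B(\Ss^a)$ is infinite, I can greedily choose pairwise distinct $\aaa^a\in\V_\B(\Ss^a)$, producing a tuple in the product with no equal coordinate pair and contradicting the containment.

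The step that deserves the most care is the boolean-specific compression of finite conjunctions into a single equation; without it, the $k=1$ argument would need several equations and would collapse to the $\uu$-compact setting, destroying the distinction between the two halves of the theorem. The duplication trick and the greedy choice of distinct representatives are routine once that encoding is available.
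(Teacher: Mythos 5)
The paper does not actually prove this theorem; it is imported from~\cite{kotov} as a black box (the reference proves it for arbitrary algebraic structures), so there is no internal proof to compare against. Your argument is a correct, self-contained proof for the boolean case, and it is worth recording how it works: the $E_0$ case uses the inconsistent equation $0=1$; the $E_1$/$\qq$-compact case doubles the variables so that $\V_\B(\Ss(X)\cup\Ss(Y))=\{(\vv,\vv)\}\subseteq\V_\B(e)$ for the single equation $e$ encoding $X=Y$, and the finite subsystem produced by $\qq$-compactness splits as $\Ss'(X)\cup\Ss''(Y)$ with both factors infinite, giving the contradiction; the $E_k$/$\uu$-compact case is the same with $k+1$ copies, the pigeonhole principle, and a greedy choice of pairwise distinct representatives. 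All steps check out, including the decomposition $\V_\B(\Ss'(X)\cup\Ss''(Y))=\V_\B(\Ss')\times\V_\B(\Ss'')$ for disjoint variable tuples. One remark on your closing comment: the single-equation compression $\bigvee_i(\t_i\bar\s_i\vee\bar\t_i\s_i)=0$ is convenient but not actually essential for the $\qq$-compact half. Even if $X=Y$ were left as the $n$ separate equations $x_i=y_i$, the containment $\V_\B(\Ss(X)\cup\Ss(Y))\subseteq\V_\B(x_i=y_i)$ holds for each $i$ individually, so one may apply $\qq$-compactness $n$ times and take the union of the resulting finite subsystems (exactly the mechanism of Lemma~\ref{l:aux}); intersections are compatible with $\qq$-compactness, it is only unions that force the passage to $\uu$-compactness. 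This is how the result is obtained in an arbitrary language in~\cite{kotov}; your boolean-specific encoding simply shortcuts that step.
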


\begin{remark}
In the paper~\cite{kotov} Definition~\ref{def:E_k-system} and Theorem~\ref{th:kotov} were formulated for an arbitrary algebraic structure in the language with no predicates.
\end{remark}


Let $\B_1,\B_2$ be boolean algebras of the language $\LL$. We shall say that $\B_1,\B_2$ are {geometrically equivalent} if for any system of $\C$-equations $\Ss$ it holds $\Rad_{\B_1}(\Ss)=\Rad_{\B_2}(\Ss)$.  

\medskip

Let us introduce some definitions devoted to model theory and universal algebra.

A formula of the language $\LL$  
\begin{multline*}
\forall x_1\forall x_2\ldots\forall x_n (\t_1(X)=\s_1(X))\wedge(\t_2(X)=\s_2(X))\wedge\ldots
\\ \wedge(\t_m(X)=\s_m(X))\to(\t(X)=\s(X)),
\end{multline*}
where $\t_i(X),\s_i(X),\t(X),\s(X)$ are terms of $\LL$, is called a \textit{quasi-identity}. The {\it quasivariety} $\qvar(\B)$ generated by a boolean $\C$-algebra $\B$ is the minimal class of all boolean $\C$-algebras which satisfy all quasi-identities $\varphi$ such that $\varphi$ is true in $\B$.   

\textit{The pre-variety} $\pvar(\B)$ generated by a boolean $\C$-algebra is the minimal class of algebras which contains $\B$ and it is closed under the taking subalgebras and Cartesian products.

\begin{theorem}\textup{\textup{\cite{uniTh_III}}}
\label{th:qvar=pvar}
For any $\qq$-compact boolean $\C$-algebra $\B$ it holds
\[
\qvar(\B)=\pvar(\B).
\]  
\end{theorem}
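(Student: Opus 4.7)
The inclusion $\pvar(\B)\subseteq\qvar(\B)$ is routine and holds for every algebra, since quasi-identities are Horn sentences, hence preserved under subalgebras and direct products; any member of $\pvar(\B)=\mathrm{SP}(\B)$ therefore satisfies every quasi-identity valid in $\B$. The substance of the theorem is the reverse inclusion. Given $\A\in\qvar(\B)$, my goal is to embed $\A$ into a Cartesian power of $\B$; by the standard product-of-separating-maps trick it suffices to produce, for every pair $a\neq b$ in $\A$, a $\C$-homomorphism $h\colon\A\to\B$ with $h(a)\neq h(b)$.

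\medskip

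The core of the proof is the case in which $\A$ is generated over its constants by finitely many elements $a_1,\ldots,a_n$. I would form the full relation system
\[
\Ss=\{\t(X)=\s(X):\t(a_1,\ldots,a_n)=\s(a_1,\ldots,a_n)\text{ in }\A\}
\]
in variables $X=\{x_1,\ldots,x_n\}$, with the key observation that a point of $\V_\B(\Ss)$ is exactly a tuple $(b_1,\ldots,b_n)\in\B^n$ for which $a_i\mapsto b_i$ extends to a $\C$-homomorphism $\A\to\B$. Given $a\neq b$ in $\A$, choose terms with $a=p(\a)$ and $b=q(\a)$; a separating homomorphism corresponds to a point of $\V_\B(\Ss)\setminus\V_\B(p(X)=q(X))$. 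Suppose no such point exists, so $\V_\B(\Ss)\subseteq\V_\B(p(X)=q(X))$. The $\qq$-compactness of $\B$ then supplies a finite $\Ss^\prime\subseteq\Ss$ with $\V_\B(\Ss^\prime)\subseteq\V_\B(p(X)=q(X))$, an inclusion which is precisely a quasi-identity $\bigwedge\Ss^\prime\to(p(X)=q(X))$ valid in $\B$, hence valid in $\A$ by $\A\in\qvar(\B)$. But the tuple $\a$ satisfies every equation of $\Ss$ by construction, in particular of $\Ss^\prime$, so the quasi-identity forces $p(\a)=q(\a)$, i.e., $a=b$ --- a contradiction. Thus the separating homomorphism exists.

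\medskip

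Extending this argument to a general $\A\in\qvar(\B)$ is where I expect the main obstacle to lie. The previous step applied to the finitely generated subalgebra $\A_0=\langle a,b\rangle_\C$ yields only a separating map $\A_0\to\B$, which must then be extended to all of $\A$. Since $\B$ need not be complete, a direct Sikorski-type extension is unavailable, and I would proceed by a Zorn argument on partial $\C$-homomorphisms extending the given separator, verifying at each one-generator extension step that the required image exists in $\B$ by re-applying the $\qq$-compactness argument of the previous paragraph to an enlarged system. Once separating homomorphisms $\A\to\B$ have been produced for every pair $a\neq b$, their product delivers the embedding $\A\hookrightarrow\B^X$ and gives $\A\in\pvar(\B)$.
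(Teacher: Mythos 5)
First, a point of orientation: the paper does not actually prove this theorem --- it is quoted from \cite{uniTh_III} --- so there is no in-paper argument to compare yours against, and the proposal has to be judged on its own terms. Your first two paragraphs are correct. The inclusion $\pvar(\B)\subseteq\qvar(\B)$ is indeed routine, and the finitely generated case is exactly the standard argument: take the diagram system $\Ss$ of a generating tuple, identify points of $\V_\B(\Ss)$ with $\C$-homomorphisms $\A\to\B$, and convert the finite subsystem supplied by $\qq$-compactness into a quasi-identity that transfers back to $\A$ and forces $a=b$. This shows that every \emph{finitely generated} member of $\qvar(\B)$ lies in $\pvar(\B)$, which is the form in which the result is used elsewhere in the paper.

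The last paragraph is a genuine gap, and you correctly sensed where it lies. Two concrete problems. (i) The one-generator extension step cannot be justified by ``re-applying the $\qq$-compactness argument to an enlarged system'': the system whose solvability you need there has coefficients among the images $h(\A_0)\subseteq\B$, which in general lie outside $\C$, whereas $\qq$-compactness is a statement about systems of $\C$-equations only; Zorn then yields a maximal partial $\C$-homomorphism with no reason to be total. (ii) More seriously, no argument at this level of generality can succeed, because for an arbitrary $\qq$-compact algebra the full-class equality $\qvar=\pvar$ is \emph{false}: the abelian group $\Z$ is equationally Noetherian, hence $\qq$-compact, and $\Q\in\qvar(\Z)$ (clear denominators in any failing quasi-identity), yet the only homomorphism $\Q\to\Z$ is zero, so $\Q\notin\pvar(\Z)$. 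Consequently either the theorem must be read as a statement about finitely generated algebras --- in which case your second paragraph already completes the proof and the third should simply be deleted --- or the passage from finitely generated members of $\qvar(\B)$ to arbitrary ones must exploit something specific to boolean $\C$-algebras (e.g.\ separation by an analogue of ultrafilters respecting the constants), which your sketch does not supply.
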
    

The next theorem was proved in~\cite{AGoverGroups_2} for groups, however one can prove it for an arbitrary algebraic structure. We give this theorem for boolean $\C$-algebras.

\begin{theorem}\textup{\textup{\cite{AGoverGroups_2}}}
\label{th:geom_equivalence_pvar}
Boolean $\C$-algebras $\B_1,\B_2$ are geometrically equivalent iff
\[
\pvar(\B_1)=\pvar(\B_2).
\]  
\end{theorem}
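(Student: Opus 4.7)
The plan is to use the coordinate algebra $\Gamma_\B(\Ss)=\F(X)/\Rad_\B(\Ss)$ as the bridge between the two sides, where $\F(X)$ is the $\C$-algebra of terms on the finite variable set $X$. By the very definition of the radical, evaluation $[\t]\mapsto(\t(\b))_{\b\in\V_\B(\Ss)}$ embeds $\Gamma_\B(\Ss)$ into $\B^{\V_\B(\Ss)}$, so $\Gamma_\B(\Ss)\in\pvar(\B)$; conversely, any finitely generated $\A\in\pvar(\B)$ arises this way, because if $\A=\langle a_1,\dots,a_n\rangle\subseteq\B^I$ and $\phi:\F(X)\twoheadrightarrow\A$ sends $x_j\mapsto a_j$, then $(a_1,\dots,a_n)$ lies coordinatewise in $\V_\B(\ker\phi)$, forcing $\Rad_\B(\ker\phi)=\ker\phi$ and hence $\A\cong\Gamma_\B(\ker\phi)$.

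For $(\Leftarrow)$, assume $\pvar(\B_1)=\pvar(\B_2)$; in particular $\B_2\in\pvar(\B_1)$, so fix a $\C$-embedding $\iota:\B_2\hookrightarrow\B_1^I$. Given a system $\Ss$ and $\t=\s\in\Rad_{\B_1}(\Ss)$, take $\b\in\V_{\B_2}(\Ss)$ and apply $\iota$ componentwise to obtain tuples $\b^{(i)}\in\B_1^n$ each satisfying $\Ss$ in $\B_1$ (here preservation of the constants of $\LL$ by $\iota$ is crucial). The hypothesis yields $\t(\b^{(i)})=\s(\b^{(i)})$ for every $i\in I$, and injectivity of $\iota$ pulls this back to $\t(\b)=\s(\b)$ in $\B_2$; the symmetric argument closes the direction.

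For $(\Rightarrow)$, assume geometric equivalence and aim to show $\B_2\in\pvar(\B_1)$ (the symmetric statement gives the reverse, and together with closure of $\pvar$ under $\mathbf{S}$ and $\mathbf{P}$ this forces $\pvar(\B_1)=\pvar(\B_2)$). For any finitely generated $\C$-subalgebra $\A=\langle b_1,\dots,b_n\rangle$ of $\B_2$, form $\phi:\F(X)\twoheadrightarrow\A$, $x_j\mapsto b_j$, and set $\Ss=\ker\phi$. The tuple $(b_1,\dots,b_n)$ lies in $\V_{\B_2}(\Ss)$, which forces $\Rad_{\B_2}(\Ss)=\Ss$ and so $\A\cong\F(X)/\Ss$; by geometric equivalence $\Rad_{\B_1}(\Ss)=\Ss$ as well, whence $\A\cong\Gamma_{\B_1}(\Ss)\hookrightarrow\B_1^{\V_{\B_1}(\Ss)}$ and so $\A\in\pvar(\B_1)$.

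The technical crux I expect is the last step, passing from the finitely generated $\C$-subalgebras of $\B_2$ to $\B_2$ itself, since prevarieties are not a priori closed under directed unions. The way out is a local theorem of the Baumslag--Miasnikov--Remeslennikov type: $\pvar(\B_1)$ is carved out among $\C$-algebras by the strict universal Horn conditions in finitely many variables (possibly with infinite premises) that hold in $\B_1$, and satisfaction of such sentences is preserved under directed unions of $\C$-substructures because atomic truth is. Since $\B_2$ is the directed union of its finitely generated $\C$-subalgebras, each of which has just been placed in $\pvar(\B_1)$, this yields $\B_2\in\pvar(\B_1)$, completing the proof.
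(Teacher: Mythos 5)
The paper contains no proof of this theorem: it is quoted from Myasnikov--Remeslennikov with the remark that the group-theoretic argument carries over to arbitrary algebraic structures, so there is nothing in the text to compare your argument against line by line. Your direction $(\Leftarrow)$ is correct, and your reduction of $(\Rightarrow)$ to finitely generated $\C$-subalgebras via coordinate algebras (radical-closed congruences, $\A\cong\Gamma_{\B_1}(\ker\phi)\hookrightarrow\B_1^{\V_{\B_1}(\ker\phi)}$) is also correct and is the standard route. Note that the paper itself only ever uses the $(\Leftarrow)$ direction, in the proof of the fourth part of Theorem 7.3.

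The final step, which you rightly flag as the crux, is a genuine gap: the ``local theorem'' you invoke is false. The prevariety $\pvar(\B_1)=\mathbf{ISP}(\B_1)$ is not axiomatized by universal Horn sentences in finitely many variables (infinite premises allowed), and it is not closed under directed unions; the class cut out by such sentences is the in general strictly larger class $\qvar_\omega(\B_1)$, and your argument only places $\B_2$ there. Concretely, in the language of one unary function $s$ take $\B_1=(\N,s)$ and $\B_2=(\Z,s)$: every finitely generated subalgebra of $\Z$ is isomorphic to $(\N,s)$ and hence lies in $\pvar(\B_1)$, and the two algebras are in fact geometrically equivalent (solution sets of systems of difference constraints and their radicals are invariant under the diagonal shift), yet there is no homomorphism $(\Z,s)\to(\N,s)$ at all, so $\Z\notin\pvar(\N)$. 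This shows both that your closing principle fails and that the theorem read for ``arbitrary algebraic structures'' is false as stated; any correct proof of $(\Rightarrow)$ must therefore exploit Boolean-specific structure --- for instance, route the argument through the explicit criterion of Theorem 7.2 and a Sikorski-type extension of $\C$-homomorphisms --- rather than a general local-closure argument. What your $(\Rightarrow)$ actually establishes is equality of the infinitary quasivarieties in finitely many variables, not of the prevarieties.
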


\section{Transformations of equations}

Let $X=\{x_1,x_2,\ldots,x_n\}$ be a finite set of variables. One can introduce the new variables $Z=\{z_\alpha|\alpha\in\{0,1\}^n\}$ indexed by all $n$-tuples of $0,1$ (hence, $|Z|=2^n$). By $\pi_i(\alpha)$ ($1\leq i\leq n$) we denote the projection of the $n$-tuple $\alpha$ onto $i$-th coordinate. The variables $X=\{x_1,x_2,\ldots,x_n\}$ are replaced to the variables of the set $Z$ by the law

\begin{equation}
\label{eq:old_var_by_new}
x_i=\bigvee_{\pi_i(\alpha)=1}z_\alpha.
\end{equation}

The converse substitution of the variables is defined by the formula
\begin{equation}
\label{eq:new_var_by_old}
z_\alpha=x_1^{a_1}x_2^{a_2}\ldots x_n^{a_2},
\end{equation} 
where $\alpha=(a_1,a_2,\ldots,a_n)$, $a_i\in\{0,1\}$ and
\begin{equation}
x_i^{a_i}=\begin{cases}
x_i \mbox{ if } a_i=1,\\
\overline{x}_i \mbox{ if } a_i=0.
\end{cases}
\end{equation} 
For example, if $X=\{x_1,x_2,x_3\}$ we have $z_{(0,1,1)}=\overline{x}_1x_2x_3$, $z_{(0,0,0)}=\overline{x}_1\overline{x}_2\overline{x}_3$. 

\begin{remark}
Further the disjunctions and conjunctions of the type
\[
\bigvee_{\alpha\in\{0,1\}^n}z_\alpha,\;\bigwedge_{\alpha\in\{0,1\}^n}z_\alpha
\]
are denoted by
\[
\bigvee_{\alpha}z_\alpha,\;\bigwedge_{\alpha}z_\alpha
\]
for shortness.
\end{remark}

Using the axioms of boolean algebra, one can reduce any equation $\t(X)=\s(X)$ to a finite system 
\begin{equation}
\label{eq:this_is_equivalent_to_equation}
\Ss_{\t=\s}=\{z_\alpha\leq\c_\alpha|\alpha\in\{0,1\}^n\}\cup\{z_\alpha z_\beta=0|\alpha\neq\beta\}
\cup\{\bigvee_{\alpha}z_\alpha=1\}
\end{equation} 
in variables $Z$.

From~(\ref{eq:this_is_equivalent_to_equation}) it follows that any system $\Ss(X)$ is equivalent to a system
\begin{eqnarray}
\label{eq:S(Z)}
\Ss=\bigcup_{\alpha}\Ss_\alpha\cup\\
\label{eq:S(Z)_cond1}
\bigcup_{\substack{ \alpha\neq\beta}}\{z_\alpha z_\beta=0\}\cup\\
\label{eq:S(Z)_cond2}
\{\bigvee_{\alpha}z_\alpha=1\}
\end{eqnarray}
where $\Ss_\alpha=\{z_\alpha\leq\c_i|i\in I_\alpha\}$.

\section{Equationally Noetherian boolean algebras}
\label{sec:Noetherian}

\begin{theorem}
\label{th:Noeth_criterion}
A boolean $\C$-algebra $\B$ is equationally Noetherian iff the subalgebra $\C$ generated by the constants of the language $\LL$ is finite.
\end{theorem}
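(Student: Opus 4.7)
For ($\Leftarrow$), assume $\C$ is finite and let $\Ss=\{e_j|j\in J\}$ be a system in variables $X=\{x_1,\ldots,x_n\}$. After the substitution $x_i=\bigvee_{\pi_i(\alpha)=1}z_\alpha$ the system takes the normal form~\eqref{eq:S(Z)}--\eqref{eq:S(Z)_cond2} of Section~3: each equation $e_j$ contributes, for each of the finitely many $\alpha\in\{0,1\}^n$, an inequality $z_\alpha\leq\c_\alpha^{(j)}$ with $\c_\alpha^{(j)}\in\C$, on top of the fixed finite constraints~\eqref{eq:S(Z)_cond1}--\eqref{eq:S(Z)_cond2}. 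Since $\C$ is finite, for each $\alpha$ the family $\{\c_\alpha^{(j)}|j\in J\}$ has only finitely many distinct values, so some finite $F_\alpha\subseteq J$ realizes $\inf_{j\in F_\alpha}\c_\alpha^{(j)}=\inf_{j\in J}\c_\alpha^{(j)}$. Taking $F=\bigcup_\alpha F_\alpha$ (still finite), the subsystem $\{e_j|j\in F\}$ imposes the same upper bound on each $z_\alpha$ as $\Ss$; through the $X\leftrightarrow Z$ bijection it therefore defines the same solution set in $\B^n$, so $\{e_j|j\in F\}\sim_\B\Ss$.

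For ($\Rightarrow$), assume $\C$ is infinite. The combinatorial core is to produce an infinite sequence of pairwise disjoint nonzero elements $\b_1,\b_2,\ldots\in\C$. If $\C$ has infinitely many atoms, take them as the $\b_i$; otherwise let $a$ be the join of the finitely many atoms of $\C$. Then $\C\cap[0,a]$ is finite (every element below $a$ is the join of the atoms it dominates), so the decomposition $x=(xa)\vee(x\bar a)$ forces $\C\cap[0,\bar a]$ to be infinite and atomless, and one extracts the $\b_i$ by iteratively splitting $\bar a$ inside $\C$. Because $\C$ is generated by $\{\c_i|i\in I\}$, each $\b_i$ is itself an $\LL$-term. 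Consider the system $\Ss=\{x\cdot\b_i=0|i\in\N\}$ in one variable $x$. For every finite $F\subset\N$ and any $j\notin F$, pairwise disjointness gives $\b_j\cdot\b_i=0$ for $i\in F$, so $\b_j$ solves the subsystem indexed by $F$; however $\b_j\cdot\b_j=\b_j\neq 0$ shows $\b_j\notin\V_\B(\Ss)$. Hence no finite subsystem of $\Ss$ is equivalent to $\Ss$, and $\B\notin\Nbf$.

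The main obstacle is the dichotomy used in the construction of the $\b_i$ when $\C$ has only finitely many atoms: one must verify that $\C\cap[0,\bar a]$ is both infinite and atomless, and that the required splits live inside the subalgebra $\C$ rather than merely in the ambient $\B$. The rest of the argument is bookkeeping around the normal form of Section~3 together with the one-line verification of the counterexample.
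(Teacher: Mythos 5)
Your proof is correct. The sufficiency direction ($\C$ finite $\Rightarrow$ Noetherian) is essentially the paper's argument made explicit: the paper simply observes that a finite $\C$ leaves only finitely many inequivalent equations in a fixed finite set of variables, which is exactly what your normal-form bookkeeping with the sets $F_\alpha$ verifies; your version is more careful but not different in substance. In the necessity direction you diverge: the paper invokes the standard fact that an infinite boolean algebra contains an infinite chain $\c_1<\c_2<\ldots$ in $\C$ and uses the system $\{x\geq\c_n\mid n\in\N\}$ (any finite subsystem is satisfied by $\c_{\max}$, which fails the next inequality), whereas you build an infinite antichain $\b_1,\b_2,\ldots$ of pairwise disjoint nonzero elements of $\C$ and use $\{x\cdot\b_i=0\mid i\in\N\}$. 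Both witness systems work, and both rest on a classical structural dichotomy for infinite boolean algebras; your atom/atomless case analysis for producing the antichain is sound (elements below a finite join of atoms are joins of the atoms they dominate, so $\C\cap[0,a]$ is finite and $\C\restriction\bar a$ is infinite and atomless, hence splittable ad infinitum). The paper's route is shorter only because it quotes the chain-existence fact without proof; neither approach is more general, and your observation that each $\b_i$ is a ground $\LL$-term (so the equations are legitimate $\C$-equations) is a point the paper leaves implicit as well.
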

\begin{proof}
Suppose, $\C$ is finite. There exists at most finite number of distinct equations. Hence, all systems over $\B$ are finite. 

Let $\C$ be an infinite algebra. From the theory of boolean algebras it follows there exists an infinite chain 
\[
\c_1<\c_2<\ldots<\c_n<\ldots
\]

It is easy to prove that the system $\Ss=\{x\geq\c_{1},x\geq\c_2,\ldots,x\geq\c_n,\ldots\}$ is not equivalent to any finite subsystem.
\end{proof}

\begin{theorem}
\label{th:noeth_and_noeth_are_equivalent}
If a boolean $\C$-algebra $\B$ is consistently Noetherian then it is equationally Noetherian.
\end{theorem}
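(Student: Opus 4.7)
The plan is to argue by contraposition through Theorem~\ref{th:Noeth_criterion}. Since every equationally Noetherian algebra is trivially consistently Noetherian, the content of this theorem lies in the converse. By Theorem~\ref{th:Noeth_criterion}, it suffices to show that if $\C$ is \emph{infinite}, then $\B$ is not consistently Noetherian; equivalently, I must exhibit a \emph{consistent} system $\Ss$ over $\B$ which is not equivalent (over $\B$) to any of its finite subsystems.

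The key observation is that the witness already used in the proof of Theorem~\ref{th:Noeth_criterion} is in fact consistent, so the same construction serves here. Assuming $\C$ is infinite, I extract an infinite strictly ascending chain $\c_1<\c_2<\ldots<\c_n<\ldots$ from $\C$ (standard boolean-algebra fact: any infinite boolean algebra contains an infinite chain, as used in the preceding theorem), and consider
\[
\Ss=\{x\geq \c_n\mid n\in\N\}.
\]
Consistency is immediate: the element $1\in\B$ satisfies $1\geq \c_n$ for every $n$, so $1\in\V_\B(\Ss)$.

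It remains to show that no finite subsystem $\Ss_0\subseteq\Ss$ is equivalent to $\Ss$ over $\B$. Any such $\Ss_0$ mentions only finitely many indices; letting $N$ be the largest, $\Ss_0$ is equivalent over $\B$ to the single equation $x\geq\c_N$, whose solution set contains $x=\c_N$. But the chain is strict, so $\c_N\not\geq\c_{N+1}$, which means $\c_N\notin\V_\B(\Ss)$. Therefore $\V_\B(\Ss)\subsetneq\V_\B(\Ss_0)$, contradicting $\B\in\Nbf_c$. This completes the contrapositive, and with it the theorem. There is essentially no hard step here; the only substantive point is noticing that the chain-of-constants system from Theorem~\ref{th:Noeth_criterion} is already solved by $x=1$, so the distinction between $\Nbf_c$ and $\Nbf$ collapses in the boolean $\C$-algebra setting.
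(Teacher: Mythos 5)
Your proof is correct, but it takes a genuinely different route from the paper's. You argue by contraposition through Theorem~\ref{th:Noeth_criterion}: if $\B$ is not equationally Noetherian then $\C$ is infinite (this uses only the easy direction of that theorem), and then the ascending-chain system $\Ss=\{x\geq\c_n\mid n\in\N\}$ — the very witness used there — is consistent because $1\in\V_\B(\Ss)$, yet no finite subsystem is equivalent to it, so $\B\notin\Nbf_c$. The paper instead gives a direct proof via the normal form~(\ref{eq:S(Z)},\ref{eq:S(Z)_cond1},\ref{eq:S(Z)_cond2}): every system decomposes into the blocks $\Ss_\alpha=\{z_\alpha\leq\c_i\mid i\in I_\alpha\}$, each of which is automatically consistent (it has the solution $0$), together with a fixed \emph{finite} remainder; applying consistent Noetherianity to each block yields a finite equivalent subsystem of the whole system. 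The paper's argument exposes the structural reason the two notions coincide here — after the $Z$-substitution, inconsistency can only arise from the finite part of a system — and is self-contained modulo the normal-form machinery. Your argument is shorter and economically reuses the chain construction, at the cost of routing through Theorem~\ref{th:Noeth_criterion}; since that theorem is proved independently and earlier, there is no circularity, and your key observation (that the non-Noetherianity witness is already consistent) is exactly the right one.
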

\begin{proof}
Without loss of generality one can assume that a system $\Ss$ over $\B$ is written in the form~(\ref{eq:S(Z)},\ref{eq:S(Z)_cond1},\ref{eq:S(Z)_cond2}). Let us prove that the system $\Ss_\alpha=\{z_\alpha\leq\c_i|i\in I_\alpha\}$ is equivalent to its finite subsystem. As any $\Ss_\alpha$ is consistent ($0\in\V_\B(\Ss_\alpha)$), by the condition of the theorem it is equivalent to a finite subsystem $\Ss_\alpha^\prime$. 

Thus, the system $\Ss$ is equivalent to
\[
\Ss^\prime=\bigcup_{\alpha}\Ss_\alpha^\prime\cup\bigcup_{\substack{ \alpha\neq\beta}}\{z_\alpha z_\beta=0\}\cup\{\bigvee_{\alpha}z_\alpha=1\},
\]
and $\B$ is equationally Noetherian.
\end{proof}

\section{Weakly equationally Noetherian boolean algebras}

\label{sec:weak_noetherian}
\begin{theorem}
\label{th:weakly_Noetherian_criterion}
A boolean $\C$-algebra $\B$ is weakly equationally Noetherian iff the algebra $\C$ is complete in $\B$, i.e. any set of elements $\{\c_j|j\in J\}\subseteq\C$ has the infimum in the algebra $\B$ and the infimum  belongs to $\C$.
\end{theorem}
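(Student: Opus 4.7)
The plan is to prove the two implications separately.

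\emph{Forward direction (weak Noetherianity implies completeness of $\C$).} Fix an arbitrary subset $\{\c_j|j\in J\}\subseteq\C$ and apply the hypothesis to the one-variable test system $\Ss=\{x\leq\c_j|j\in J\}$, whose solution set in $\B$ is the downset of common lower bounds of the $\c_j$'s (and in particular contains $0$). Let $\Ss'$ be an equivalent finite system in the variable $x$. The key classical fact I would use is that every equation $\t(x)=\s(x)$ in one variable over a boolean $\C$-algebra reduces, via the normal form $f(x)=f(1)\,x\vee f(0)\,\bar x$ with $f(0),f(1)\in\C$, to an equation of the shape $a\,x\vee b\,\bar x=0$ with $a,b\in\C$, whose solution set is the interval $[b,\bar a]$. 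Hence $\V_\B(\Ss')$ is a finite intersection of such intervals, again an interval $[b^*,a^*]$ whose endpoints lie in $\C$ (since $\C$ is closed under finite joins and meets). Since $0\in\V_\B(\Ss')$ forces $b^*=0$, matching $[0,a^*]$ with the original downset exhibits $a^*$ as the infimum of $\{\c_j|j\in J\}$ in $\B$, with $a^*\in\C$.

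\emph{Reverse direction.} Given an arbitrary system $\Ss$ in $X=\{x_1,\ldots,x_n\}$, pass to the normal form~(\ref{eq:S(Z)})--(\ref{eq:S(Z)_cond2}). For each of the finitely many indices $\alpha\in\{0,1\}^n$, completeness supplies $d_\alpha=\inf\{\c_i|i\in I_\alpha\}\in\C$, which, being an element of $\C$, is expressible as a finite boolean combination of constants of $\LL$; the infinite subfamily $\Ss_\alpha$ is then equivalent over $\B$ to the single equation $z_\alpha\leq d_\alpha$. Converting back to the variables $X$ via~(\ref{eq:new_var_by_old})---the orthogonality and covering conditions become identities after the substitution $z_\alpha=x_1^{a_1}\cdots x_n^{a_n}$ and can be dropped---produces the finite system $\{x_1^{a_1}\cdots x_n^{a_n}\leq d_\alpha\,|\,\alpha\in\{0,1\}^n\}$, equivalent over $\B$ to $\Ss$.

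The main obstacle lies in the forward direction: one must argue that the endpoints of $\V_\B(\Ss')$ genuinely lie in $\C$ rather than merely in $\B$. This hinges on the standard one-variable boolean normal form together with the observation that $\C$, being itself a boolean subalgebra, is closed under the finitary meets and joins arising when intersecting the interval solution sets of finitely many equations; once that reduction is available, the identification of $a^*$ with the infimum is immediate and the rest of the argument is essentially bookkeeping.
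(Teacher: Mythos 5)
Your proof is correct and follows essentially the same route as the paper: the test system $\{x\leq\c_j\mid j\in J\}$ together with the one-variable normal form for the forward direction, and the normal form (\ref{eq:S(Z)})--(\ref{eq:S(Z)_cond2}) plus the infima $d_\alpha=\inf\{\c_i\mid i\in I_\alpha\}\in\C$ for the converse. The only cosmetic differences are that you identify the endpoint of the interval $[0,a^*]$ directly as the infimum where the paper argues by contradiction via an intermediate lower bound, and that you explicitly substitute back to the variables $X$, which the paper leaves implicit.
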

\begin{proof}
If the set of constants $\{\c_j|j\in J\}\subseteq\C$  neither have not the infimum in $\B$ nor its infimum belongs to the set $\B\setminus\C$, then the system $\Ss=\{x\leq\c_j|j\in J\}$ has a nonzero solution in $\B$.

As the boolean $\C$-algebra $\B$ is weakly equationally Noetherian, there exists a finite system 
\[
\Ss_0=\{x\leq\c_j^\prime|j\in J^\prime\}\cup\{\bar{x}\leq\c_j^{\prime\prime}|j\in J^{\prime\prime}\}
\]
which is equivalent to $\Ss$. Obviously, $\Ss_0$ is equivalent to the system of two equations $\{x\leq\c^\prime\}\cup\{\bar{x}\leq\c^{\prime\prime}\}$, where
\[
\c^\prime=\bigwedge_{j\in J^\prime}\c_j^\prime,\;
\c^{\prime\prime}=\bigwedge_{j\in J^{\prime\prime}}\c_j^{\prime\prime}
\]
Since $0\in\B$ is a solution of $\Ss$, we have $0\in\V_\B(\Ss_0)$. Thus, $\c^{\prime\prime}=1$, and the equation $\bar{x}\leq\c^{\prime\prime}$ becomes trivial. It follows that the system $\Ss_0$ is equivalent to $x\leq\c^\prime$.

As $\c^\prime\in\V_\B(\Ss_0)$, we obtain $\c^\prime\in\V_\B(\Ss)$. Therefore, $\c^\prime\leq\c_j$ for all $j\in J$. Since $\c^\prime$ is not the infimum of the set $\{\c_j|j\in J\}$, there exists an element $b\in\B$ such that $\c^\prime<b<\c_j$ for all $j\in J$. 

Indeed, $b\in\V_\B(\Ss)$, however $b\notin\V_\B(\Ss_0)$. Thus, the system $\Ss_0$ is not equivalent to $\Ss$.

Prove the converse. Suppose $\C$ is complete in $\B$, and a system $\Ss$ has the form~(\ref{eq:S(Z)},\ref{eq:S(Z)_cond1},\ref{eq:S(Z)_cond2}). 

Denote by $\c_\alpha\in\C$ the infimum of the set  $\{\c_i|i\in I_\alpha\}$. Obtain that $\Ss$ is equivalent to the finite system
\[
\bigcup_{\alpha}\{z_\alpha\leq\c_\alpha\}\cup\bigcup_{\substack{ \alpha\neq\beta}}\{z_\alpha z_\beta=0\}\cup\{\bigvee_{\alpha}z_\alpha=1\},
\]
and, hence, $\B$ is weakly equationally Noetherian.
\end{proof}

\section{$\qq$- and $\uu$-compactness of boolean algebras}

By $P=(p_\alpha|\alpha\in\{0,1\}^n)$ we denote a point which coordinates $p_\alpha$ are the values of the variables $z_\alpha$.

Suppose the coordinates of the point $P=(p_\alpha|\alpha\in\{0,1\}^n)$ are elements of boolean $\C$-algebra $\B$. 
Consider an arbitrary linear order over the set of indexes $\alpha\in\{0,1\}^n$; the minimal element denote by $\om$ respectively. Define a point $Q=(q_\alpha|\alpha\in\{0,1\}^n)$ as
\begin{equation}
q_\alpha=\begin{cases}
p_{\om}, \; \beta=\om\\
p_\alpha\bigwedge_{\beta<\alpha}\bar{p}_\beta,\; \beta\neq\om
\end{cases}
\label{eq:separation}
\end{equation}

A point $Q$ above is called a {\it splitting of $P$ with the first coordinate $\om$}. The splitting of a point has the following properties.

\begin{lemma}
\label{l:properties_of_splitting}
For the splitting $Q=(q_\alpha|\alpha\in\{0,1\}^n)$ of a point $P=(p_\alpha|\alpha\in\{0,1\}^n)$ the next holds:
\begin{enumerate}
\item $q_\om=p_\om$;
\item $q_\alpha\leq p_\alpha$ for all $\alpha\in\{0,1\}^n$;
\item $q_\alpha q_\gamma=0$, for $\alpha\neq\gamma$;
\item $\bigvee_\alpha p_\alpha=\bigvee_\alpha q_\alpha$.
\end{enumerate}
\end{lemma}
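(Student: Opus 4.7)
The plan is to verify the four clauses in turn; (1)--(3) are essentially definition-chasing from~(\ref{eq:separation}), and (4) is the only step that requires a small induction. Clauses (1) and (2) fall out directly: the first case of the definition gives (1), and for $\alpha\neq\om$ the element $q_\alpha$ is $p_\alpha$ meet with additional factors, which can only push it down in the order, yielding $q_\alpha\leq p_\alpha$.

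For clause (3), fix $\alpha\neq\gamma$ and assume without loss of generality that $\gamma<\alpha$ in the chosen linear order. Then the defining conjunction for $q_\alpha$ contains $\bar p_\gamma$ as a factor; combined with (2) applied at $\gamma$ (so $q_\gamma\leq p_\gamma$), one obtains $q_\alpha q_\gamma\leq\bar p_\gamma\, p_\gamma=0$. The subcase $\gamma=\om$ is handled identically, since $q_\om=p_\om$ and $\bar p_\om$ still appears in the product defining $q_\alpha$ for $\alpha\neq\om$.

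Clause (4) is the only mildly non-routine step. The inequality $\bigvee_\alpha q_\alpha\leq\bigvee_\alpha p_\alpha$ is immediate from (2). For the reverse direction, I would enumerate the indices along the linear order as $\om=\alpha_0<\alpha_1<\cdots$ and prove by induction on $k$ the stronger statement that the partial joins agree,
\[
\bigvee_{i\leq k}q_{\alpha_i}=\bigvee_{i\leq k}p_{\alpha_i}.
\]
The base $k=0$ is clause (1). For the inductive step, De Morgan rewrites $q_{\alpha_k}$ as $p_{\alpha_k}\cdot\overline{\bigvee_{i<k}p_{\alpha_i}}$, after which the standard boolean identity $A\vee x\bar A=A\vee x$ with $A:=\bigvee_{i<k}p_{\alpha_i}$ collapses the left side to $A\vee p_{\alpha_k}$, as required. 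The main (modest) obstacle is exactly this De Morgan rewrite: one must check that the index set $\{\beta:\beta<\alpha_k\}$ appearing in the definition of $q_{\alpha_k}$ matches the enumeration $\{\alpha_i:i<k\}$, which is immediate from the linear order.
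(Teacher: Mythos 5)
Your proof is correct and follows essentially the same route as the paper's: clauses (1)--(2) by inspection of the definition, clause (3) by noting that $\bar p_\gamma$ occurs as a factor of $q_\alpha$ for the larger index, and clause (4) by induction on the partial joins along the linear order (the paper carries out the inductive step by distributing $\vee$ over the conjunction, which is the same computation as your De Morgan plus $A\vee x\bar A=A\vee x$ absorption). The only cosmetic difference is that you handle the $\gamma=\om$ subcase of (3) explicitly, which the paper leaves implicit.
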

\begin{proof}
The first two statements are immediately follows from the definition of the splitting.

Let us prove the third one. We have 
\[
q_\alpha q_\beta=(p_\alpha\bigwedge_{\gamma<\alpha}\bar{p}_\gamma)(p_\beta\bigwedge_{\delta<\beta}\bar{p}_\delta)
\]

Suppose $\alpha<\beta$ for the given linear order over the set of indexes. Then the conjunction $\bigwedge_{\delta<\beta}\bar{p}_\delta$ contains $\bar{p}_\alpha$, hence 
\[
q_\alpha q_\beta=(p_\alpha\bigwedge_{\gamma<\alpha}\bar{p}_\gamma)(p_\beta \bar{p}_\alpha\bigwedge_{\substack{\delta<\beta\\ \delta\neq\alpha}}\bar{p}_\delta)=p_\alpha \bar{p}_\alpha(\bigwedge_{\gamma<\alpha}\bar{p}_\gamma\bigwedge_{\substack{\delta<\beta\\ \delta\neq\alpha}}\bar{p}_\delta)=0.
\]

Let us prove the fourth statement. Let the indexes $\om$, $\om^\prime$ be minimal and maximal respectively for the given linear order over the set $\{\alpha|\alpha\in\{0,1\}^n\}$. For an arbitrary  $\gamma$ we prove the equality
\begin{equation}
\bigvee_{\alpha\leq\gamma}q_\alpha=\bigvee_{\alpha\leq\gamma}p_\alpha
\label{eq:may_reject1}
\end{equation}
by the induction.

If $\gamma=\om$ we have the true equality $q_{\om}=p_{\om}$. Suppose that for all $\gamma<\delta$ the equality~(\ref{eq:may_reject1}) holds, and prove it for the index $\delta$. Indeed,
\begin{multline*}
\bigvee_{\alpha\leq\delta}q_\alpha=\bigvee_{\alpha<\delta}q_\alpha\vee q_\delta=
\bigvee_{\alpha<\delta}p_\alpha\vee p_\delta\bigwedge_{\beta<\delta}\bar{p}_\beta=
(\bigvee_{\alpha<\delta}p_\alpha\vee p_\delta)\bigwedge_{\beta<\delta}(\bigvee_{\alpha<\delta}p_\alpha\vee \bar{p}_\beta)=\\
(\bigvee_{\alpha\leq\delta}p_\alpha)
\bigwedge_{\beta<\delta}(\bigvee_{\substack{\alpha<\delta \\ \alpha\neq\beta}}p_\alpha\vee p_\beta\vee \bar{p}_\beta)=
(\bigvee_{\alpha\leq\delta}p_\alpha)\bigwedge_{\beta<\delta}(\bigvee_{\substack{\alpha<\delta \\\alpha\neq\beta}}p_\alpha\vee 1)=
\bigvee_{\alpha\leq\delta}p_\alpha.
\end{multline*}

For $\gamma=\om^\prime$ the equality~(\ref{eq:may_reject1}) coincides with the statement of the lemma.

\end{proof}

By Lemma~\ref{l:properties_of_splitting}, we obtain the next simple result about the consistency of systems over boolean algebras. 

\begin{lemma}
\label{l:may_reject}
If the system
\begin{equation}
\Ss(Z)=\bigcup_{\alpha}\Ss_\alpha\\
\label{eq:S(Z)111}
\cup\{\bigvee_{\alpha}z_\alpha=1\}
\end{equation}
is consistent over a boolean $\C$-algebra $\B$, so is the system defined by the equations~(\ref{eq:S(Z)},\ref{eq:S(Z)_cond1},\ref{eq:S(Z)_cond2}). 
\end{lemma}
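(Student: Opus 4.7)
The plan is to take any solution $P=(p_\alpha\mid\alpha\in\{0,1\}^n)$ of the weaker system~(\ref{eq:S(Z)111}) and feed it into the splitting construction~(\ref{eq:separation}) to obtain a point $Q=(q_\alpha\mid\alpha\in\{0,1\}^n)$, which I will show is a solution of the full system defined by~(\ref{eq:S(Z)}),~(\ref{eq:S(Z)_cond1}),~(\ref{eq:S(Z)_cond2}). All verifications reduce to applications of Lemma~\ref{l:properties_of_splitting}, so almost nothing new has to be proved.

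First I would fix any linear order on the index set $\{0,1\}^n$ (with minimal element $\om$) so that the splitting $Q$ of $P$ is defined. The three types of equations in the full system are then checked in turn. For each block $\Ss_\alpha=\{z_\alpha\leq\c_i\mid i\in I_\alpha\}$: since $P$ solves $\Ss(Z)$, we have $p_\alpha\leq\c_i$ for every $i\in I_\alpha$, and by item~2 of Lemma~\ref{l:properties_of_splitting} the inequality $q_\alpha\leq p_\alpha$ holds, which yields $q_\alpha\leq\c_i$ as required. The disjointness conditions $z_\alpha z_\beta=0$ for $\alpha\neq\beta$ from~(\ref{eq:S(Z)_cond1}) are exactly item~3 of Lemma~\ref{l:properties_of_splitting}. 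Finally, the completeness condition $\bigvee_\alpha z_\alpha=1$ from~(\ref{eq:S(Z)_cond2}) is satisfied because item~4 of Lemma~\ref{l:properties_of_splitting} gives $\bigvee_\alpha q_\alpha=\bigvee_\alpha p_\alpha$, and the right-hand side equals $1$ since $P$ already satisfies this equation in~(\ref{eq:S(Z)111}).

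Thus $Q\in\V_\B(\Ss)$, proving consistency of the full system. There is no genuine obstacle here: the lemma is essentially a corollary of Lemma~\ref{l:properties_of_splitting}, whose role in the paper is precisely to remove the disjointness and covering clauses from~(\ref{eq:S(Z)}),~(\ref{eq:S(Z)_cond1}),~(\ref{eq:S(Z)_cond2}) for the purpose of consistency checks. The only point requiring any care is verifying that the splitting is well-defined, which simply requires fixing a linear order on the finite set of indices; this is immediate.
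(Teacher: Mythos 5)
Your proof is correct and follows exactly the paper's own argument: take a solution $P$ of the weaker system, form its splitting $Q$, and verify the three blocks of equations using items 2, 3 and 4 of Lemma~\ref{l:properties_of_splitting} respectively. Nothing to add.
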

\begin{proof}
Let $P=(p_\alpha|\alpha\in\{0,1\}^n)$ be a solution of the system~(\ref{eq:S(Z)111}), and $Q=(q_\alpha|\alpha\in\{0,1\}^n)$ is the splitting of $P$ with the first coordinate $\om$. 

Following the third statement of Lemma~\ref{l:properties_of_splitting}, it follows $q_\alpha q_\beta=0$, hence the point $Q$ satisfies the equations~(\ref{eq:S(Z)_cond1}).

For coordinates of the point $P$ it holds $\bigvee_{\alpha}p_\alpha=1$, and by the fourth statement of Lemma~\ref{l:properties_of_splitting} we have $\bigvee_{\alpha}q_\alpha=1$. Therefore, the point $Q$ satisfies the equation~(\ref{eq:S(Z)_cond2}).

By the second statement of Lemma~\ref{l:properties_of_splitting}, we have $q_\alpha\leq p_\alpha$. Hence, all inequalities of the systems $\Ss_\alpha$ hold for the point $Q$.

Thus, the point $Q$ is a solution of the system~(\ref{eq:S(Z)},\ref{eq:S(Z)_cond1},\ref{eq:S(Z)_cond2}).
\end{proof}

\begin{theorem}
\label{th:q-compactness}
A boolean $\C$-algebra $\B$ is $\qq$-compact iff  there do not exist neither $E_0$- nor $E_1$-system over $\B$.  
\end{theorem}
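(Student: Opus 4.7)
The forward direction is exactly Theorem~\ref{th:kotov}, so the substance lies in the converse. Assume $\B$ admits neither $E_0$- nor $E_1$-systems, and take a system $\Ss$ and equation $\t(X)=\s(X)$ with $\V_\B(\Ss)\subseteq\V_\B(\t=\s)$; the task is to produce a finite $\Ss'\subseteq\Ss$ with the same inclusion. My first step is to invoke the transformation of Section~3 to rewrite $\Ss$ in the normal form~(\ref{eq:S(Z)})--(\ref{eq:S(Z)_cond2}) in the variables $Z$, and $\t=\s$ as a finite normal-form system $\Ss_{\t=\s}$ of shape~(\ref{eq:this_is_equivalent_to_equation}). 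Since $\V_\B(\Ss_{\t=\s})$ is the intersection of the solution sets of the individual atomic equations of $\Ss_{\t=\s}$, Lemma~\ref{l:aux} reduces the task to finding, for each such atomic equation, a finite subsystem of the normal form of $\Ss$ whose solution set is contained in it. The atomic equations $z_\alpha z_\beta=0$ and $\bigvee_\alpha z_\alpha=1$ already occur inside the finite part~(\ref{eq:S(Z)_cond1})--(\ref{eq:S(Z)_cond2}) of the normal form of $\Ss$, so the essential case is an atomic equation $z_\alpha\leq\c'$ for some $\alpha$ and $\c'\in\C$.

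If the normal form of $\Ss$ is inconsistent, the absence of $E_0$-systems combined with the statement on $E_k$-systems preceding Theorem~\ref{th:kotov} yields a finite inconsistent subsystem, which trivially implies every equation. In the consistent case, the plan is to argue by contradiction: supposing no finite subsystem of the normal form of $\Ss$ implies $z_\alpha\leq\c'$, I intend to construct an $E_1$-system over $\B$ and so contradict the hypothesis. The candidate is the augmented system $\mathcal{T}=\Ss(Z)\cup\{\c'\leq z_\alpha\}$, possibly strengthened by further auxiliary equations extracted from the partition structure and the splitting construction of Lemma~\ref{l:properties_of_splitting}. The assumed implication together with the new lower bound forces $z_\alpha=\c'$ in every solution of $\mathcal{T}$, and with a careful choice of auxiliary constraints on the remaining coordinates $z_\beta$ the whole tuple can be pinned down so that $|\V_\B(\mathcal{T})|=1$.

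For each finite subsystem $\mathcal{T}_0\subseteq\mathcal{T}$, the hypothesis supplies, from the underlying finite subsystem of $\Ss(Z)$, a solution with $z_\alpha\bar{\c}'\neq 0$; because no finite subset of $\{\c_i\}_{i\in I_\alpha}$ already forces $z_\alpha\leq\c'$, the upper bounds $\inf_F\c_i$ form an infinite strictly descending chain above $\c'$, yielding infinitely many admissible values of $z_\alpha$ in each interval $[\c',\inf_F\c_i]$, which together with the splitting construction produce infinitely many full solutions of $\mathcal{T}_0$. Thus $\mathcal{T}$ is an $E_1$-system, contradicting the hypothesis. The main obstacle I anticipate is the simultaneous verification that the global solution set of $\mathcal{T}$ really collapses to a single point (requiring the full partition structure, not only the $\alpha$-coordinate) and that each finite subsystem retains genuinely infinitely many solutions (relying on the Boolean structure of $\B$ and on a careful use of the splitting construction). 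A subsidiary case to dispose of is when $\c'$ is not a lower bound of $\{\c_i\}_{i\in I_\alpha}$: there the augmented system admits a finite inconsistent subsystem directly, and the argument again reduces to the absence of $E_0$-systems.
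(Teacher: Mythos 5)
Your opening reductions are sound and agree with the paper: the forward direction is Theorem~\ref{th:kotov}, the normal form plus Lemma~\ref{l:aux} reduces everything to single atomic consequences $z_\alpha\leq\c'$, and the inconsistent case is handled by the absence of $E_0$-systems. The gap is in the consistent case, which is the entire substance of the theorem and which your sketch rests on claims that are false in general. The hypothesis ``no finite subsystem of $\Ss$ forces $z_\alpha\leq\c'$'' only says that $\bigwedge_{i\in F}\c_i\cdot\bar{\c}'\neq 0$ for every finite $F\subseteq I_\alpha$; it does not make the finite infima lie above $\c'$, nor form an infinite strictly descending chain, and an interval $[0,b]$ in a Boolean algebra may be finite. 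Hence you cannot conclude that every finite subsystem of your $\mathcal{T}$ has \emph{infinitely many} solutions, which is what the definition of an $E_1$-system requires. Equally, you cannot in general ``pin down'' the remaining coordinates so that $|\V_\B(\mathcal{T})|=1$: equations may only mention elements of $\C$, while the coordinates of a solution of $\Ss$ need not be constants; and you have not ruled out that $\Ss\cup\{\c'\leq z_\alpha\}$ is simply inconsistent (no solution of $\Ss$ need attain $z_\alpha=\c'$), which would throw you back on an $E_0$-type claim with the same unproved ``infinitely many solutions'' requirement.

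The paper avoids all of this by a dichotomy on the \emph{one-variable} auxiliary system $\Ss_0=\{x\leq\c_i\mid i\in I_\alpha\}\cup\{x\leq\bar{\c}'\}$. If $\Ss_0$ has a nonzero solution $x_0$, one checks $x_0\nleq p_\alpha$ for a solution $P$ of $\Ss$, replaces $p_\alpha$ by $p_\alpha\vee x_0$, and applies the splitting (Lemma~\ref{l:may_reject}) to produce a point of $\V_\B(\Ss)$ violating $z_\alpha\leq\c'$; so this case contradicts the premise $\V_\B(\Ss)\subseteq\V_\B(z_\alpha\leq\c')$ and never invokes the $E_1$-hypothesis. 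If instead $\V_\B(\Ss_0)=\{0\}$, the absence of $E_1$-systems is used only through its contrapositive (the Statement following Definition~\ref{def:E_k-system}): a system with exactly one solution that is not an $E_1$-system is equivalent to a finite subsystem, whence $\bigwedge_{i\in I'_\alpha}\c_i\leq\c'$ for some finite $I'_\alpha$ and $\{z_\alpha\leq\c_i\mid i\in I'_\alpha\}$ is the required finite subsystem of $\Ss$. Your argument is missing both this case split and the contrapositive use of the hypothesis; as written the consistent case does not go through.
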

\begin{proof}
The direct statement follows from Theorem~\ref{th:kotov}. Prove the converse.

Let $\Ss$ be a system of equations~(\ref{eq:S(Z)},\ref{eq:S(Z)_cond1},\ref{eq:S(Z)_cond2}) over $\B$ and 
\begin{equation}
\V_\B(\Ss)\subseteq \V_\B(z_\alpha\leq\c)
\label{eq:q-compactness0}
\end{equation} 
for some equation $z_\alpha\leq\c$. 
Below we find a finite subsystem $\Ss^\prime\subseteq\Ss$ with 
\begin{equation}
\label{eq:q-compactness1}
\V_\B(\Ss^\prime)\subseteq \V_\B(z_\alpha\leq\c)
\end{equation}

Suppose $\Ss$ is inconsistent. By the condition, there exists a finite inconsistent subsystem $\Ss^\prime\subseteq\Ss$, and the inclusion~(\ref{eq:q-compactness1}) obviously holds. 

Assume now that a point $P=(p_\beta|\beta\in\{0,1\}^n)$ is a solution of the system $\Ss$. Consider the next system in a single variable $x$
\[
\Ss_0=\{x\leq\c_i|i\in I_\alpha\}\cup\{x\leq \bar{\c}\}.
\]  

There exist exactly two possibilities.
\begin{enumerate}
\item The system $\Ss_0$ has a nonzero solution $x_0\neq 0$. 

Assume $x_0\leq p_\alpha$, and it implies $x_0\leq\c$, as the point $P$ satisfies $z_\alpha\leq \c$. Since the element $x_0$ satisfies the system $\Ss_0$, it holds $x_0\leq \bar{\c}$. However, two inequalities $x_0\leq\c$, $x_0\leq \bar{\c}$ imply $x_0=0$ that contradicts with the choice of the element $x_0$. 

Finally, we have to put $x_0\nleq p_\alpha$.

Let us define a point $Q=(q_\beta|\beta\in\{0,1\}^n)$ as
\[
q_\beta=\begin{cases}
p_\beta,\; \beta\neq\alpha\\
p_\beta\vee x_0,\; \beta=\alpha.
\end{cases}
\]

Clearly, $Q$ satisfies all systems $\Ss_\beta\subseteq\Ss$, $\beta\in\{0,1\}^n$ and the equation~(\ref{eq:S(Z)_cond2}). Denote by $R=(r_\beta|\beta\in\{0,1\}^n)$ the splitting of $Q$ with the first coordinate $\alpha$. By Lemma~\ref{l:may_reject}, the point $R$ is a solution of $\Ss$. As $\alpha$ is the first coordinate of the splitting, we have $r_\alpha=q_\alpha$ and
\[
r_\alpha\c=(p_\alpha\vee x_0)\c=p_\alpha\c\vee x_0\c=p_\alpha\vee 0=p_\alpha.
\]
Since $x_0\nleq p_{\alpha}$, then $p_\alpha\neq r_\alpha$. Thus, $r_\alpha\nleq\c$, however it contradicts with the inclusion~(\ref{eq:q-compactness0}).

\item The system $\Ss_0$ has a unique solution $x=0$. By the condition of the theorem, there is not any $E_1$-system over $\B$, hence $\Ss_0$ is equivalent to its finite subsystem
\[
\Ss_0^\prime=\{x\leq\c_i|i\in I_\alpha^\prime\}\cup\{x\leq \bar{\c}\},\; |I_\alpha^\prime|<\infty.
\]  

The equality $\V_\B(\Ss_0^\prime)=\{0\}$ implies
\[
\bar{\c}\cdot\bigwedge_{i\in I_\alpha^\prime}\c_i=0\Leftrightarrow\bigwedge_{i\in I_\alpha^\prime}\c_i\leq\c.
\]

Thus, for the finite subsystem $\Ss^\prime=\{z_\alpha\leq\c_i|i\in I_\alpha^\prime\}\subseteq\Ss_\alpha\subseteq\Ss$ the inclusion~(\ref{eq:q-compactness1}) holds.
\end{enumerate}

We proved that for the equations of the form $z_\alpha\leq\c$ there exists a finite subsystem $\Ss^\prime$ which makes the inclusion~(\ref{eq:q-compactness1}) valid. Consider now an equation $\t(X)=\s(X)$ in variables $X=\{x_1,x_2,\ldots,x_n\}$ such that the next inclusion holds
\[
\V_\B(\Ss)\subseteq \V_\B(\t(X)=\s(X)).
\]
The equation $\t(X)=\s(X)$ can be written as a system $\Ss_{\t=\s}$~(\ref{eq:this_is_equivalent_to_equation}). Above we proved that for any equation $z_\alpha\leq\c_\alpha\in\Ss_{\t=\s}$ there exists a finite subsystem $\Ss^\prime_\alpha\subseteq\Ss_\alpha$ with
\[
\V_\B(\Ss^\prime_\alpha)\subseteq\V_\B(z_\alpha\leq\c_\alpha).
\] 
Finally, by Lemma~\ref{l:aux}, for the finite subsystem $\Ss^\prime=\bigcap_{\alpha}\Ss^\prime_\alpha\subseteq\Ss$ it holds 
\[
\V_\B(\Ss^\prime)\subseteq \V_\B(\t(X)=\s(X)).
\]
\end{proof}

\begin{theorem}
\label{th:u-compactness}
A boolean $\C$-algebra $\B$ is $\uu$-compact iff there are not $E_k$-systems over $\B$ for any $k\in\N$.  
\end{theorem}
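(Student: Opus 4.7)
The forward direction is immediate from Theorem~\ref{th:kotov}. For the converse, my plan is first to extract two useful consequences of the hypothesis ``no $E_k$-system for any $k \in \N$'': (i) any system $T$ over $\B$ with $|\V_\B(T)|$ finite is equivalent to some finite subsystem (otherwise $T$ itself would be an $E_{|\V_\B(T)|}$-system); and (ii) by Theorem~\ref{th:q-compactness}, $\B$ is $\qq$-compact.

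I would then reduce the setup as follows. Writing $\Ss$ in standard form (\ref{eq:S(Z)})--(\ref{eq:S(Z)_cond2}) and expanding each target $\t_i = \s_i$ via~(\ref{eq:this_is_equivalent_to_equation}) into an equivalent finite conjunction $\bigwedge_\alpha (z_\alpha \leq \c^i_\alpha)$, the given inclusion $\V_\B(\Ss) \subseteq \bigcup_i \V_\B(\t_i=\s_i)$ becomes, by a Skolemization of the inner existential over $\alpha$, equivalent to the following: for every choice function $f \colon \{1,\ldots,m\} \to \{0,1\}^n$, $\V_\B(\Ss) \subseteq \bigcup_{i=1}^m \V_\B(z_{f(i)} \leq \c^i_{f(i)})$. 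As there are only finitely many such $f$, it suffices (via a Lemma~\ref{l:aux}-style union argument) to produce a finite subsystem of $\Ss$ for one fixed $f$; so the problem reduces to target unions of the form $\bigcup_{j=1}^m \V_\B(z_{\alpha_j} \leq \c_j)$.

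Paralleling the proof of Theorem~\ref{th:q-compactness}, I group these simplified targets by coordinate as $J_\alpha = \{j : \alpha_j = \alpha\}$ and, for each $(\alpha,j)$ with $j \in J_\alpha$, form the single-variable auxiliary system $\Ss_0^{\alpha,j} = \{x \leq \c_i : i \in I_\alpha\} \cup \{x \leq \bar{\c}_j\}$. If some $\Ss_0^{\alpha_0,j_0}$ admits only the zero solution, then by $\qq$-compactness a finite subsystem of $\Ss_{\alpha_0}$ already forces $z_{\alpha_0} \leq \c_{j_0}$, which is contained in the target union. Otherwise every $\Ss_0^{\alpha,j}$ has a nonzero witness $x_0^{\alpha,j}$, and the aim is to combine these witnesses with a fixed solution $P$ of $\Ss$ (mimicking the $\qq$-compactness proof via Lemma~\ref{l:properties_of_splitting}) so as to produce a solution $R \in \V_\B(\Ss)$ with $r_\alpha \nleq \c_j$ for every $(\alpha,j)$ with $j \in J_\alpha$, contradicting the inclusion.

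The main obstacle is precisely this combination step: the splitting of Lemma~\ref{l:properties_of_splitting} preserves only one distinguished coordinate $\om$ intact while the other coordinates may strictly shrink, yet the contradiction requires bad witnesses to survive simultaneously at every coordinate $\alpha$ with $J_\alpha \neq \emptyset$. I expect to resolve this by introducing an auxiliary multi-variable system $T$ in fresh variables $\{y^{\alpha,j}\}$ that encodes pairwise disjoint versions of the witnesses together with the constraints from the respective $\Ss_\alpha$; one then applies observation~(i) to $T$ when its solution set is finite, or extracts the required $R$ directly from a disjoint-witness solution of $T$ otherwise, in either case completing the contradiction.
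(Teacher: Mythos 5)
The forward direction, the reduction to unions of single inequalities $z_{\alpha}\leq\c$ (your choice-function argument is exactly the paper's distributivity decomposition into the sets $M(\alpha_1,\ldots,\alpha_m)$ followed by Lemma~\ref{l:aux}), the treatment of repeated coordinates, and the zero-solution case of the auxiliary systems are all sound and essentially match the paper. But the converse has a genuine gap exactly at the point you flag as the ``main obstacle'': the simultaneous survival of a bad witness at every relevant coordinate. Your proposed resolution --- an auxiliary system $T$ in fresh variables $y^{\alpha,j}$ to which you apply observation (i) when $\V_\B(T)$ is finite, or from which you ``extract $R$ directly'' otherwise --- does not work as stated. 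First, ``each $y^{\alpha,j}$ is nonzero'' is not expressible by equations, so $\V_\B(T)$ always contains the zero tuple and the dichotomy ``finite solution set versus existence of an all-nonzero pairwise-disjoint solution'' is not the dichotomy your two branches handle. Second, even in the finite branch, observation (i) only yields a finite subsystem of $T$ equivalent to $T$; since $T$ is not a subsystem of $\Ss$, this produces no finite subsystem of $\Ss$ witnessing the required inclusion, so that branch terminates nowhere.

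What is actually needed, and what the paper supplies, is a direct proof that pairwise-disjoint nonzero witnesses exist: since each auxiliary system $\Ss_{0\alpha}$ that is not equivalent to a finite subsystem must (by the absence of $E_k$-systems for all $k$) have an \emph{infinite} solution set, one builds the witnesses $q_1,\ldots,q_m$ by induction, choosing the next solution $x_{0m}$ distinct from every join $\bigvee_{i\in K}q'_i$ of the previously constructed witnesses, and then either taking $x_{0m}\overline{\bigvee_i q'_i}$ if it is nonzero, or splitting some $q'_{i_0}$ into the two nonzero pieces $q'_{i_0}\bar{x}_{0m}$ and $q'_{i_0}x_{0m}$; downward closure of the solution sets (all constraints have the form $x\leq\c$) keeps every piece a solution of its system. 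This induction is precisely where the hypothesis for $k\geq 2$ enters, and it is absent from your argument. You are also missing the explicit combination formula: the final point is not obtained from the splitting of Lemma~\ref{l:properties_of_splitting} but is defined directly by $r_\alpha=p_\alpha\bigwedge_{\beta}\bar{q}_\beta\vee q_\alpha$, after which one verifies $R\in\V_\B(\Ss)$ and $r_\alpha\nleq\c_\alpha$ by hand. Without the inductive disjointification and this formula, the contradiction in the nonzero case is not established.
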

\begin{proof}
The direct statement was proven in Theorem~\ref{th:kotov}. Prove the converse.

Let $\Ss$ be a system of equations~(\ref{eq:S(Z)},\ref{eq:S(Z)_cond1},\ref{eq:S(Z)_cond2}) over $\B$ and 
\begin{equation}
\V_\B(\Ss)\subseteq \bigcup_{\alpha}\V_\B(z_\alpha\leq\c_\alpha).
\label{eq:u-compactness0}
\end{equation} 
Let us define a finite subsystem $\Ss^\prime\subseteq\Ss$ with 
\begin{equation}
\label{eq:u-compactness1}
\V_\B(\Ss^\prime)\subseteq \bigcup_{\alpha}\V_\B(z_\alpha\leq\c_\alpha)
\end{equation}

By the condition of the theorem, for the inconsistent system $\Ss$ there exists a finite subsystem $\Ss^\prime\subseteq\Ss$ which satisfies the inclusion~(\ref{eq:u-compactness1}). 

Consider a solution $P=(p_\beta|\beta\in\{0,1\}^n)$ of the system $\Ss$. Define the next systems ($\alpha\in\{0,1\}^n$) in variable $x$
\[
\Ss_{0\alpha}=\{x\leq\c_i|i\in I_\alpha\}\cup\{x\leq \bar{\c}_\alpha\}.
\]  

We have exactly two cases.
\begin{enumerate}
\item Any system $\Ss_{0\alpha}$ has a nonzero solution $x_{0\alpha}\neq 0$. As $x_{0\alpha}\leq \bar{\c}_\alpha$, then $x_{0\alpha}\c_\alpha=0$. One can assume that all systems $\Ss_{0\alpha}$ are not equivalent to their finite subsystems (otherwise, one can replace some systems to its equivalent finite subsystems and further deal only with the unchanged systems). 

By the condition of the theorem, any system $\Ss_{0\alpha}$ has an infinite solution set. 

Further we shall define a point $Q=(q_\alpha|\alpha\in\{0,1\}^n)$ such that
\begin{enumerate}
\item $q_\alpha\in\V_\B(\Ss_{0 \alpha})\setminus\{0\}$;
\item $q_\alpha q_\beta=0$;
\item $q_\alpha x_{0\alpha}\neq 0$.
\end{enumerate}

We define the point $Q$ by the induction on the number of the systems $\Ss_{0 \alpha}$. For the convenience we denote the indexes of the systems $\Ss_{0\alpha}$ by the natural numbers $1,2,\ldots,m$. 

{\bf The basis.} For a single system $\Ss_{1}$ put $q_\alpha=x_{0 1}$.

{\bf The inductive hypothesis.} Assume that for the systems $\Ss_{i}$, $1\leq i\leq m-1$ there exists a point $Q^\prime=(q_i^\prime|1\leq i\leq m-1)$ with the properties above. 

{\bf The inductive step.} As the system $\Ss_m$ has an infinite solution set, by the element $x_{0m}$ one can choose a solution of $\Ss_m$ such that 
\[
x_{0m}\neq\bigvee_{i\in K} q^\prime_i
\]
for any set of indexes $K\subseteq\{1,2,\ldots,m-1\}$.

Consider the next cases.
\begin{enumerate}
\item $\bigvee_{i=1}^{m-1} q^\prime_i\ngeq x_{0m}$. Hence, the element  $x_{0m}\overline{\bigvee_{i=1}^{m-1} q^\prime_i}$ does not equal $0$. A point $Q$ defined as
\[
q_j=\begin{cases}
q_j^\prime,\;1\leq j\leq m-1\\
x_{0m}\overline{\bigvee_{i=1}^k q^\prime_i},\; j=m
\end{cases}
\]
obviously satisfies all properties above.

\item $\bigvee_{i=1}^{m-1} q^\prime_i>x_{0m}$. Without loss of generality, one can assume that for any $1\leq i\leq m-1$ it holds $q^\prime_ix_{0m}\neq 0$ (otherwise, if $q^\prime_ix_{0m}=0$ for some $q^\prime_i$ we exclude $q^\prime_i$ from the disjunction $\bigvee_{i=1}^{m-1} q^\prime_i$). 

For some number $1\leq i_0\leq m-1$ it holds $q^\prime_{i_0} x_{0m}<q^\prime_{i_0}$ (if for all $i$ $q^\prime_ix_{om}=q^\prime_{i}$, then $\bigvee_{i=1}^{m-1} q^\prime_i\leq x_{0m}$ that contradicts with the condition). From the inequality $q^\prime_{i_0} x_{0m}<q^\prime_{i_0}$ it follows $q^\prime_{i_0} \bar{x}_{0m}\neq 0$. It is directly checked that $Q$ with coordinates
\[
q_j=\begin{cases}
q_j^\prime,\;1\leq j\leq m-1,j\neq i_0\\
q^\prime_{i_0} \bar{x}_{0m},\; j=i_0\\
q^\prime_{i_0} x_{0m},\; j=m
\end{cases}
\]
satisfies all properties above.
\end{enumerate}
\bigskip 

Finally, the point $Q$ with required properties is found. Remark that the inequality  $q_\alpha\leq\bar{\c}_\alpha$ implies $q_\alpha\c_\alpha=0$ for all $\alpha\in\{0,1\}^n$. 

Suppose coordinates of a point $R=(r_\alpha|\alpha\in\{0,1\}^n)$ are defined by
\[
r_\alpha=p_\alpha\bigwedge_{\beta}\bar{q}_\beta\vee q_\alpha.
\]

Since $p_\alpha\in\V_\B(\Ss)$ and $q_\alpha\in\V_\B(\Ss_\alpha)$, hence $r_\alpha\in\V_\B(\Ss_\alpha)$ for every $\alpha\in\{0,1\}^n$.

For $\alpha\neq\gamma$ we have 
\begin{multline*}
r_\alpha r_\gamma=(p_\alpha \bigwedge_{\beta}\bar{q}_\beta\vee q_\alpha)
(p_\gamma \bigwedge_{\beta}\bar{q}_\beta\vee q_\gamma)=\\
p_\alpha p_\gamma\bigwedge_{\beta}\bar{q}_\beta\vee p_\alpha [\bigwedge_{\beta}\bar{q}_\beta q_\gamma]\vee p_\gamma[q_\alpha  \bigwedge_{\beta}\bar{q}_\beta]\vee q_\alpha q_\gamma=0\vee 0\vee 0\vee 0=0.
\end{multline*}
Thus, the point $R$ satisfies the equations~(\ref{eq:S(Z)_cond1}).

Compute 
\begin{multline*}
\bigvee_{\alpha}r_\alpha=\bigvee_{\alpha}(p_\alpha \bigwedge_{\beta}\bar{q}_\beta\vee q_\alpha)=
\bigwedge_{\beta}\bar{q}_\beta\bigvee_{\alpha}p_\alpha\vee \bigvee_{\alpha}q_\alpha=
\bigwedge_{\beta}\bar{q}_\beta\cdot 1\vee \bigvee_{\alpha}q_\alpha=\\
\bigwedge_{\beta}\bar{q}_\beta\vee \bigvee_{\alpha}q_\alpha=
\bigwedge_{\beta}(\bar{q}_\beta\vee\bigvee_{\alpha}q_\alpha)=
\bigwedge_{\beta}(\bar{q}_\beta\vee q_\beta\vee\bigvee_{\alpha\neq\beta}q_\alpha)=\bigwedge_{\beta}1=1,
\end{multline*}
hence the point $R$ satisfies the equation~(\ref{eq:S(Z)_cond2}), and we finally obtain $R\in\V_\B(\Ss)$. 

By the inclusion~(\ref{eq:u-compactness0}), there exists an index $\alpha$ with $r_\alpha\leq\c_\alpha$. We have 
\[
r_\alpha\c_\alpha=(p_\alpha\bigwedge_{\beta}\bar{q}_\beta\vee q_\alpha)\c_\alpha=p_\alpha\bigwedge_{\beta}\bar{q}_\beta\c_\alpha\vee 0=p_\alpha\bigwedge_{\beta}\bar{q}_\beta\c_\alpha.
\]
As $r_\alpha\c_\alpha=r_\alpha$, the next equality holds
\begin{equation}
p_\alpha\bigwedge_{\beta}\bar{q}_\beta\c_\alpha=p_\alpha\bigwedge_{\beta}\bar{q}_\beta\vee q_\alpha.
\label{eq:uuuuuuuuuuu}
\end{equation}

As $p_\alpha\bigwedge_{\beta}\bar{q}_\beta\c_\alpha\leq p_\alpha\bigwedge_{\beta}\bar{q}_\beta$, the equality~(\ref{eq:uuuuuuuuuuu}) implies
\begin{eqnarray}
p_\alpha\bigwedge_{\beta}\bar{q}_\beta\c_\alpha=p_\alpha\bigwedge_{\beta}\bar{q}_\beta, \nonumber\\
\label{eq:uuuuu1}
q_\alpha\leq p_\alpha\bigwedge_{\beta}\bar{q}_\beta\c_\alpha.
\end{eqnarray}

The last inequality is impossible, since the expression $\bigwedge_{\beta}\bar{q}_\beta$ contains $\bar{q}_\alpha$, hence, $q_\alpha (p_\alpha\bigwedge_{\beta}\bar{q}_\beta\c_\alpha)=0\neq q_\alpha$.

Thus, we have that the point $R\in\V_\B(\Ss)$ does not satisfy any inequality $z_\alpha\leq\c_\alpha$ that contradicts with the inclusion~(\ref{eq:u-compactness0})

\item There exists a system $\Ss_{0\beta}$ with $\V_\B(\Ss_{0\beta})=\{0\}$. By the condition of the theorem, $\Ss_{0\beta}$ is equivalent to its finite subsystem
\[
\Ss_{0\beta}^\prime=\{x\leq\c_i|i\in I_\beta^\prime\}\cup\{x\leq \bar{\c}_\beta\},\; |I_\beta^\prime|<\infty.
\]  

The equality $\V_\B(\Ss_{0\beta}^\prime)=\{0\}$ gives 
\[
\bar{\c}_\beta\bigwedge_{i\in I_\beta^\prime}\c_i=0\Leftrightarrow\bigwedge_{i\in I_\beta^\prime}\c_i\leq\c_\beta.
\]

From the last formula it follows that the finite subsystem $\Ss^\prime=\{z_\alpha\leq\c_i|i\in I_\beta^\prime\}\subseteq\Ss_\beta\subseteq\Ss$ satisfies the inclusion~(\ref{eq:u-compactness1}), and the theorem is proved.
\end{enumerate}

\bigskip

Consider now the case, when the right part of the inclusion~(\ref{eq:u-compactness0}) contains more than one equations $\{z_\alpha\leq\c_{i\alpha}|1\leq i\leq n_\alpha\}$ in variable $z_\alpha$. Assume that the next inclusion holds
\begin{equation}
\V_\B(\Ss)\subseteq \bigcup_{\alpha}\bigcup_{i=1}^{n_\alpha}\V_\B(z_\alpha\leq\c_{i\alpha}).
\label{eq:u-compactness2}
\end{equation}

Below we shall define a finite subsystem $\Ss^\prime\subseteq\Ss$ such that  
\begin{equation}
\label{eq:u-compactness3}
\V_\B(\Ss^\prime)\subseteq \bigcup_{\alpha}\bigcup_{i=1}^{n_\alpha}\V_\B(z_\alpha\leq\c_{i\alpha}).
\end{equation}

Let $\tilde{\Ss}$ be an auxiliary system defined by the next way. Let the systems $\Ss_{i\alpha}$ ($1\leq i\leq n_\alpha$) are obtained from $\Ss_\alpha$ by the substitution of the variable $z_\alpha$ to the new variable $z_{i\alpha}$. Hence,

\[
\tilde{\Ss}=\bigcup_{\alpha}\bigcup_{i=1}^{n_\alpha}\Ss_{i\alpha}\cup\bigcup_{\substack{ \alpha\neq\beta \\ i\neq j}}\{z_{i\alpha} z_{j\beta}=0\}\cup\{\bigvee_{\alpha}\bigvee_{i=1}^{n_\alpha}z_{i\alpha}=1\}.
\]

Let us show the inclusion
\begin{equation}
\V_\B(\tilde{\Ss})\subseteq \bigcup_{\alpha}\bigcup_{i=1}^{n_\alpha}\V_\B(z_{i\alpha}\leq\c_{i\alpha}).
\label{eq:u-compactness4}
\end{equation}

Assume the converse, i.e. there exists a point $\tilde{P}=(\tilde{p}_{i\alpha}|\alpha\in\{0,1\}^n,1\leq i\leq n_\alpha)\in\V_\B(\tilde{\Ss})$ such that 
\begin{equation}
\label{eq:u-compactness44}
\tilde{p}_{i\alpha}\nleq\c_{i\alpha}
\end{equation} for all $\alpha\in\{0,1\}^n,1\leq i\leq n_\alpha$.

It is directly checked that the point $P=(p_\alpha|\alpha\in\{0,1\}^n)$, where
\[
p_\alpha=\bigvee_{i=1}^{n_\alpha}\tilde{p}_{i\alpha},
\]  
is a solution of $\Ss$. By~(\ref{eq:u-compactness2}), there exists $\beta\in\{0,1\}^n$ and a number $1\leq j\leq n_{\beta}$ such that $p_\beta\leq\c_{j\beta}$. Hence,
\[
\bigvee_{i=1}^{n_\beta}\tilde{p}_{i\beta}\leq\c_{j\beta}\Rightarrow
\tilde{p}_{i\beta}\leq\c_{j\beta}
\]
for all $1\leq i\leq n_\beta$. However, the last inequality contradicts with~(\ref{eq:u-compactness44}).

Thus, for the system $\tilde{\Ss}$ we have the inclusion~(\ref{eq:u-compactness4}) which has the form~(\ref{eq:u-compactness0}). Hence, for $\tilde{\Ss}$ one can repeat all reasonings above, and obtain a finite subsystem $\tilde{\Ss}^\prime\subseteq\tilde{\Ss}$ with

\begin{equation}
\V_\B(\tilde{\Ss}^\prime)\subseteq \bigcup_{\alpha}\bigcup_{i=1}^{n_\alpha}\V_\B(z_{i\alpha}\leq\c_{i\alpha}).
\label{eq:u-compactness5}
\end{equation}

Let $\Ss^\prime$ be a system in variables $\{z_\alpha|\alpha\in\{0,1\}^n\}$ obtained from $\tilde{\Ss}^\prime$ by the substitution of all variables $z_{i\alpha}$ ($1\leq i\leq n_\alpha$) to $z_\alpha$. 

It is easy to check that the system $\Ss^\prime$ satisfies the inclusion~(\ref{eq:u-compactness3})

\bigskip

Consider now the most general type of equations which occur in the right part of the inclusion~(\ref{eq:u-compactness0}).

Suppose the equations $\t_i(X)=\s_i(X)$, $1\leq i\leq m$ in variables $X=\{x_1,x_2,\ldots,x_n\}$ satisfy the inclusion
\begin{equation}
\V_\B(\Ss)\subseteq \bigcup_{i=1}^m\V_\B(\t_i(X)=\s_i(X)).
\label{eq:u-compactness6}
\end{equation} 

Replace the variables of the set $X$ to $Z=\{z_\alpha|\alpha\in\{0,1\}^n\}$, and obtain the inclusion
\begin{equation}
\V_\B(\Ss)\subseteq \bigcup_{i=1}^m\V_\B(\bar{\Ss}_i),
\label{eq:u-compactness66}
\end{equation} 
where a finite system $\bar{\Ss}_i=\{z_\alpha\leq\c_{\alpha i}|\alpha\in\{0,1\}^n\}$ of the form~(\ref{eq:this_is_equivalent_to_equation}) was obtained from $\t_i(X)=\s_i(X)$ by the substitutions~(\ref{eq:old_var_by_new}). Remark that we do not include in the systems $\bar{\Ss}_i$ the equations~(\ref{eq:S(Z)_cond1},\ref{eq:S(Z)_cond2}), since such equations belong to $\Ss$.

According the distributivity of the union of sets, the union $\bigcup_{i=1}^m\V_\B(\bar{\Ss}_i)$ is a finite intersection of the sets
\[
M(\alpha_1,\alpha_2,\ldots,\alpha_m)=V_\B(z_{\alpha_1}\leq\c_{\alpha_1 })\cup\V_\B(z_{\alpha_2}\leq\c_{\alpha_2})\cup\ldots\cup\V_\B(z_{\alpha_m}\leq\c_{\alpha_m}).
\]

Thus, we have the inclusion 
\[
\V_\B(\Ss)\subseteq\bigcap_{\alpha_1,\alpha_2,\ldots,\alpha_m} M(\alpha_1,\alpha_2,\ldots,\alpha_m).
\]
Above we proved that for any set $M(\alpha_1,\alpha_2,\ldots,\alpha_m)$ there exists a finite subsystem $\Ss^\prime(\alpha_1,\alpha_2,\ldots,\alpha_m)$. By Lemma~\ref{l:aux}, for the system $$\Ss^\prime=\bigcup_{\alpha_1,\alpha_2,\ldots,\alpha_m}\Ss^\prime(\alpha_1,\alpha_2,\ldots,\alpha_m),$$  it holds
\begin{equation*}
\V_\B(\Ss^\prime)\subseteq \bigcup_{i=1}^m\V_\B(\t_i(X)=\s_i(X)).
\end{equation*} 

\end{proof}

\section{Geometric equivalence of boolean algebras}
\label{sec:geometric_equivalence}

\begin{remark}
Let $\B_1,\B_2$ be boolean algebras in the language $\LL$, and the constants from $\LL$ generates the subalgebras $\C_i\subseteq\B_i$  (in other words, $\B_i$ is a boolean $\C_i$-algebra). From the paper~\cite{uniTh} it is easy to prove that the geometric equivalence of $\B_1,\B_2$ implies th isomorphism between $\C_1$ and $\C_2$. Thus, {\bf all  boolean algebras considered below have the isomorphic subalgebras generated by constants}. Equivalently, any boolean algebras $\B_1,\B_2$ in this paragraph have the same subalgebra $\C$ generated by the constants of the language $\LL$. 
\end{remark}

\begin{theorem}
\label{th:geom_equiv}
Two boolean $\C$-algebras $\B_1,\B_2$ are geometrically equivalent iff the following conditions holds: 
\begin{enumerate}
\item any inconsistent over $\B_1$ system of equations is inconsistent over $\B_2$ and vise versa;
\item the infimum of any set of constants $\{\c_j|j\in J\}\subseteq\C$ exists and equals $0$ in $\B_1$ iff the infimum of the same set exists and equals $0$ in $\B_2$.
\end{enumerate}
\end{theorem}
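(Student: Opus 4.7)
I would split the proof into the two directions of the biconditional. For the forward direction, each of the two conditions comes from testing geometric equivalence against a specific system. For condition~(1), if $\Ss$ is inconsistent over $\B_1$ then $\Rad_{\B_1}(\Ss)$ contains the equation $0=1$, hence so does $\Rad_{\B_2}(\Ss)$, forcing $\V_{\B_2}(\Ss)=\emptyset$; the reverse is symmetric. For condition~(2), I would apply geometric equivalence to the system $\Ss=\{x\leq\c_j\mid j\in J\}$: the assertion ``the infimum exists and equals~$0$ in $\B$'' is equivalent to $\V_\B(\Ss)=\{0\}$, equivalently to $(x=0)\in\Rad_\B(\Ss)$, a property of the radical and therefore transferred between $\B_1$ and $\B_2$.

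For the converse, assume (1) and (2) and prove $\Rad_{\B_1}(\Ss)\subseteq\Rad_{\B_2}(\Ss)$ for every system $\Ss$ (the other inclusion is symmetric). Fix $(\t=\s)\in\Rad_{\B_1}(\Ss)$; if $\Ss$ is inconsistent over $\B_2$ we are done by (1), so assume $\Ss$ is consistent over both algebras. Rewrite $\Ss$ in the normal form~(\ref{eq:S(Z)},\ref{eq:S(Z)_cond1},\ref{eq:S(Z)_cond2}) with subsystems $\Ss_\alpha=\{z_\alpha\leq\c_i\mid i\in I_\alpha\}$, and rewrite $\t=\s$ in the form~(\ref{eq:this_is_equivalent_to_equation}) with constants $\c_\alpha$. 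Suppose for contradiction that some $P\in\V_{\B_2}(\Ss)\setminus\V_{\B_2}(\t=\s)$ exists, with $Z$-coordinates $\tilde p_\alpha$. Because $\tilde P$ already satisfies the disjointness and covering equations of $\Ss_{\t=\s}$ (these are shared with $\Ss$), it must violate some inequality $z_{\alpha_0}\leq\c_{\alpha_0}$, and then $\tilde p_{\alpha_0}\bar{\c}_{\alpha_0}$ is a nonzero lower bound in $\B_2$ of the set $\{\c_i\bar{\c}_{\alpha_0}\mid i\in I_{\alpha_0}\}\subseteq\C$. Hence this subset of $\C$ does not have infimum~$0$ in $\B_2$, so by (2) it does not in $\B_1$ either, producing some nonzero $b^*\in\B_1$ with $b^*\leq\c_i$ for every $i\in I_{\alpha_0}$ and $b^*\c_{\alpha_0}=0$.

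Now I would pick any $Q=(q_\alpha)\in\V_{\B_1}(\Ss)$ (available by consistency in $\B_1$) and define $Q'$ by $q'_{\alpha_0}=q_{\alpha_0}\vee b^*$ and $q'_\alpha=q_\alpha\overline{b^*}$ for $\alpha\neq\alpha_0$. A direct check shows $Q'\in\V_{\B_1}(\Ss)$, while $q'_{\alpha_0}\bar{\c}_{\alpha_0}\geq b^*\neq 0$ forces $Q'$ to fail the inequality $z_{\alpha_0}\leq\c_{\alpha_0}$ of $\Ss_{\t=\s}$. Translating $Q'$ back to the original $X$-variables therefore yields a point of $\V_{\B_1}(\Ss)\setminus\V_{\B_1}(\t=\s)$, contradicting $(\t=\s)\in\Rad_{\B_1}(\Ss)$. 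The main obstacle is engineering this modification $Q'$: shrinking all other coordinates by $\overline{b^*}$ is what preserves pairwise disjointness and each subsystem $\Ss_\alpha$, and the covering equation $\bigvee_\alpha q'_\alpha=1$ then reduces to $(q_{\alpha_0}\vee b^*)\vee\overline{q_{\alpha_0}\vee b^*}=1$ after invoking that the original $q_\alpha$ partition~$1$.
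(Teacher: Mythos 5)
Your proof is correct and follows essentially the same route as the paper: the forward direction tests geometric equivalence on the same two families of systems, and the converse reduces $\t(X)=\s(X)$ to the inequalities $z_\alpha\le\c_\alpha$ and uses condition (2) to transfer the existence of a nonzero common lower bound of $\{\c_i\bar{\c}_{\alpha_0}\mid i\in I_{\alpha_0}\}$ (equivalently, a nonzero solution of the auxiliary system $\{x\le\c_i\}\cup\{x\le\bar{\c}_{\alpha_0}\}$) between the two algebras. The only real difference is in one technical step: where you restore the disjointness equations directly by setting $q'_\alpha=q_\alpha\overline{b^*}$ for $\alpha\ne\alpha_0$ and verify the covering equation by hand, the paper adds the nonzero solution to the $\alpha$-coordinate and then invokes its splitting construction (Lemma~\ref{l:properties_of_splitting} and Lemma~\ref{l:may_reject}); both correctly produce a point of $\V_{\B_1}(\Ss)$ violating $z_{\alpha_0}\le\c_{\alpha_0}$.
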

\begin{proof}
If boolean $\C$-algebras $\B_1,\B_2$ are geometrically equivalent then for any system of $\C$-equations $\Ss$ it holds $\Rad_{\B_1}(\Ss)=\Rad_{\B_2}(\Ss)$. If $\Ss$ is inconsistent over $\B_1$ its radical coincides with the set of all $\C$-equations. Hence, the radical $\Rad_{\B_2}(\Ss)$ is also contains all $\C$-equations, and the system $\Ss$ is inconsistent over $\B_2$. 

If the infimum of the set $\{\c_j|j\in J\}$ exists and equals $0$ in $\B_1$, the system $\Ss=\{x\leq\c_j|j\in J\}$ has a unique solution $x=0$ over $\B_1$. Therefore, the equation $x=0$ belongs to the radical $\Rad_{\B_1}(\Ss)$. By geometric equivalence, we have $x=0\in\Rad_{\B_2}(\Ss)$. Thus,  $\Ss$ has a unique solution $x=0$ over $\B_2$, and the infimum of the set $\{\c_j|j\in J\}$ equals $0$ in $\B_2$.

Prove the converse. Let $\Ss$ be a system of $\C$-equations. Show that for any equation $\t(X)=\s(X)$ such that $\t(X)=\s(X)\in\Rad_{\B_1}(\Ss)$ it follows $\t(X)=\s(X)\in\Rad_{\B_2}(\Ss)$.

Write the system $\Ss$ in the form~(\ref{eq:S(Z)},\ref{eq:S(Z)_cond1},\ref{eq:S(Z)_cond2}). An equation $\t(X)=\s(X)$ is equivalent to the system $\Ss_{\t=\s}$~(\ref{eq:this_is_equivalent_to_equation}). It is sufficient to prove that any equation $z_\alpha\leq\c_\alpha$ of $\Ss_{\t=\s}$ belongs to the radical $\Rad_{\B_2}(\Ss)$. 

For an equation $z_\alpha\leq\c$ of $\Ss_{\t=\s}$ let us define the auxiliary system
\begin{equation}
\label{eq:geom_equiv3}
{\Ss_0}=\{x\leq\c_i|i\in I_\alpha\}\cup\{x\leq \bar{\c}_\alpha\}.
\end{equation}

We have two cases.
\begin{enumerate}
\item $\V_{\B_1}({\Ss_0})=\{0\}$.

Therefore, the infimum of the set $\{\c_i|i\in I_\alpha\}\cup\{\bar{\c}\}$ equals $0$ in $\B_1$. By the second condition of the theorem, we have $\V_{\B_2}({\Ss_0})=\{0\}$.

Assume there is a point $P=(p_\beta|\beta\in\{0,1\}^n)\in\V_{\B_2}(\Ss)$ such that $p_\alpha\nleq\c_\alpha$. Consider the element $z_0=p_\alpha \bar{\c}_\alpha$. It is directly checked that $z_0$ is the solution of the system~(\ref{eq:geom_equiv3}) over $\B_2$. As $p_\alpha\nleq\c_\alpha$, then $z_0\neq 0$ that contradicts with  $\V_{\B_2}({\Ss_0})=\{0\}$.  Thus, for all solutions $P=(p_\alpha|\alpha\in\{0,1\}^n)$ of the system $\Ss$ it holds $p_\alpha\leq\c_\alpha$. Finally, $z_\alpha\leq\c_\alpha\in\Rad_{\B_2}(\Ss)$

\item Suppose that ${\Ss_0}$ has a nonzero solution $x_0\in\B_1$. Let $P=(p_\alpha|\alpha\in\{0,1\}^n)$ be a solution of $\Ss$ over $\B_1$ (if $\Ss$ is inconsistent over $\B_1$, by the first condition of the lemma, $\Ss$ is consistent over $\B_2$, and it holds $\Rad_{\B_1}(\Ss)=\Rad_{\B_2}(\Ss)$).  
If we assume $x_0\leq p_\alpha$ then $x_0\leq\c_\alpha$, since $P$ satisfies the equation $z_\alpha\leq \c_\alpha$. As the element $x_0$ satisfies the system $\Ss_0$, we have $x_0\leq \bar{\c}_\alpha$. However, the inequalities $x_0\leq\c_\alpha$ и $x_0\leq \bar{\c}_\alpha$ imply $x_0=0$ that contradicts with the choice of the element $x_0$. 

Finally, we have obtained $x_0\nleq p_\alpha$.

Define a point $Q=(q_\beta|\beta\in\{0,1\}^n)$ by
\[
q_\beta=\begin{cases}
p_\beta,\; \beta\neq\alpha\\
p_\beta\vee x_0,\; \beta=\alpha.
\end{cases}
\]

Obviously, $Q$ satisfies all systems $\Ss_\beta\subseteq\Ss$, $\beta\in\{0,1\}^n$, and $Q$ is a solution of the equation~(\ref{eq:S(Z)_cond2}). Denote by $R=(r_\beta|\beta\in\{0,1\}^n)$ the splitting of $Q$ with the first coordinate $\alpha$. According Lemma~\ref{l:may_reject}, the point $R$ is a solution of $\Ss$. As $\alpha$ is the first coordinate of the splitting, we have $r_\alpha=q_\alpha$, and
\[
r_\alpha\c_\alpha=(p_\alpha\vee x_0)\c_\alpha=p_\alpha\c\vee x_0\c_\alpha=p_\alpha\vee 0=p_\alpha\neq r_\alpha.
\]
Hence, $r_\alpha\nleq\c_\alpha$, and we obtain
\[
\V_{\B_1}(\Ss)\nsubseteq\V_{\B_1}(z_\alpha\leq\c).
\]
The last formula gives us the contradiction $z_\alpha\leq\c_\alpha\notin\Rad_{\B_1}(\Ss)$.
\end{enumerate}

\end{proof}

The next theorem contains all main results devoted to geometric equivalence of boolean $\C$-algebras.

\begin{theorem}
\label{th:geom_equivalence_all}
Let $\B_1,\B_2$ be boolean $\C$-algebras. The following statements hold:
\begin{enumerate}
\item for any finite system $\Ss$ we have
\[
\Rad_{\B_1}(\Ss)=\Rad_{\B_2}(\Ss);
\]
\item $\qvar(\B_1)=\qvar(\B_2)$;
\item if $\B_1,\B_2$ are weakly equationally Noetherian they are geometrically equivalent;
\item if $\B_1,\B_2$ are $\qq$-compact they are geometrically equivalent.

\end{enumerate} 
\end{theorem}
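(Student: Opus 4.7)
My plan is to prove the four statements in order, using (1) as the foundation.

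For (1), I would put the finite system $\Ss$ in the canonical form (\ref{eq:S(Z)})--(\ref{eq:S(Z)_cond2}); since each $I_\alpha$ is then finite, the infimum $\c_\alpha := \bigwedge_{i \in I_\alpha}\c_i$ exists in $\C$, so $\Ss$ is equivalent, in every boolean $\C$-algebra, to the finite system
\[
\Ss^\star = \bigcup_\alpha\{z_\alpha \le \c_\alpha\} \cup \bigcup_{\alpha\neq\beta}\{z_\alpha z_\beta = 0\} \cup \{\bigvee_\alpha z_\alpha = 1\}.
\]
An arbitrary equation $\epsilon$ is, via (\ref{eq:this_is_equivalent_to_equation}), equivalent to a system of the same shape with parameters $\c_\alpha^\epsilon \in \C$, so $\epsilon \in \Rad_\B(\Ss)$ amounts to the assertion that every partition $\{q_\alpha\}$ of $1$ in $\B$ with $q_\alpha \le \c_\alpha$ also satisfies $q_\alpha \le \c_\alpha^\epsilon$ for every $\alpha$.

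The central step---and the main obstacle---is to show that this assertion is decided inside $\C$. Consistency of $\Ss^\star$ is equivalent to $\bigvee_\alpha \c_\alpha = 1$, a finite Boolean computation among constants, so it has the same answer in $\B_1$ and $\B_2$. Assuming consistency, for each fixed $\beta$ I apply the splitting (\ref{eq:separation}) in $\C$ to the tuple $(\c_\alpha)_\alpha$ with $\beta$ placed first; by Lemma~\ref{l:properties_of_splitting} this yields a partition of $1$ lying in $\C \subseteq \B$ whose $\beta$-coordinate equals $\c_\beta$. Hence membership in $\Rad_\B(\Ss)$ forces $\c_\beta \le \c_\beta^\epsilon$ in $\C$ for every $\beta$; conversely these $\C$-inequalities trivially yield $q_\beta \le \c_\beta \le \c_\beta^\epsilon$ for every partition in any algebra. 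Thus $\epsilon \in \Rad_\B(\Ss)$ is equivalent to the purely $\C$-level criterion ``$\bigvee_\alpha \c_\alpha \ne 1$, or $\c_\beta \le \c_\beta^\epsilon$ for all $\beta$'', which does not involve $\B$; this proves (1).

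Part (2) is then immediate: a quasi-identity $\bigwedge_i(\t_i = \s_i) \to (\t = \s)$ holds in $\B$ iff $(\t = \s) \in \Rad_\B(\{\t_i = \s_i\})$, and since the premise is finite, (1) gives the same truth value in $\B_1$ and $\B_2$, whence $\qvar(\B_1) = \qvar(\B_2)$. For (3), if both algebras are weakly equationally Noetherian then Theorem~\ref{th:weakly_Noetherian_criterion} asserts $\C$ is complete in each, so taking $\c_\alpha := \inf_{i \in I_\alpha}\c_i$ (the same element of $\C$ whether computed in $\B_1$ or $\B_2$) exactly as in that theorem's proof produces a single finite system $\Ss'$ with $\Ss \sim_{\B_1} \Ss'$ and $\Ss \sim_{\B_2} \Ss'$; applying (1) to $\Ss'$ then yields $\Rad_{\B_1}(\Ss) = \Rad_{\B_1}(\Ss') = \Rad_{\B_2}(\Ss') = \Rad_{\B_2}(\Ss)$. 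Finally (4) is a direct chain: Theorem~\ref{th:qvar=pvar} gives $\pvar(\B_i) = \qvar(\B_i)$ for $\qq$-compact $\B_i$, part (2) supplies $\qvar(\B_1) = \qvar(\B_2)$, and Theorem~\ref{th:geom_equivalence_pvar} concludes.
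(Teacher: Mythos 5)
Your proposal is correct and follows essentially the same route as the paper: reduce a finite system to the form $\bigcup_\alpha\{z_\alpha\le\c_\alpha\}$ with $\c_\alpha=\bigwedge_{i\in I_\alpha}\c_i\in\C$, observe that consistency is the $\C$-level condition $\bigvee_\alpha\c_\alpha=1$, and use the splitting of the constant tuple $(\c_\alpha)$ (Lemma~\ref{l:properties_of_splitting}) to produce a solution inside $\C$ whose distinguished coordinate equals $\c_\gamma$, so that radical membership reduces to inequalities in $\C$; parts (2)--(4) then follow exactly as in the paper via Theorems~\ref{th:weakly_Noetherian_criterion}, \ref{th:qvar=pvar} and \ref{th:geom_equivalence_pvar}.
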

\begin{proof}

\begin{enumerate}
\item It is sufficient to consider a finite system $\Ss$ defined by~(\ref{eq:S(Z)},\ref{eq:S(Z)_cond1},\ref{eq:S(Z)_cond2}) and an $z_\gamma\leq\c\in\Rad_{\B_1}(\Ss)$. Prove that $z_\gamma\leq\c\in\Rad_{\B_2}(\Ss)$

A finite subsystem  $\Ss_\alpha$ of $\Ss$ is equivalent to the equation $z_\alpha\leq\c_\alpha$, where $\c_\alpha=\bigwedge_{i\in I_\alpha}\c_i$. Therefore, the system $\Ss$ can be written in the form
\[
\Ss=\bigcup_{\alpha}\{z_\alpha\leq\c_\alpha\}\cup\bigcup_{\substack{ \alpha\neq\beta}}\{z_\alpha z_\beta=0\}\cup\{\bigvee_{\alpha}z_\alpha=1\}
\]

If $\bigvee_{\alpha}\c_\alpha<1$ the system $\Ss$ is inconsistent over the both boolean $\C$-algebras $\B_1,\B_2$, and the equality $\Rad_{\B_1}(\Ss)=\Rad_{\B_2}(\Ss)$ is obviously holds.

Suppose now that $\bigvee_{\alpha}\c_\alpha=1$. It is easy to see that the point $P=(\c_\alpha|\alpha\in\{0,1\}^n)$ is a solution of 
\[
\bigcup_{\alpha}\{z_\alpha\leq\c_\alpha\}\cup\{\bigvee_{\alpha}z_\alpha=1\}
\]
over the algebras $\B_1,\B_2$.

The splitting $Q$ of the point $P$ with the first coordinate $\gamma$ is a solution of $\Ss$ over the algebras $\B_1,\B_2$. By the definition of the splitting, the coordinates of the index $\gamma$ in the points $P,Q$ equal $\c_\gamma$. As $z_\gamma\leq\c\in\Rad_{\B_1}(\Ss)$, for the algebra $\C$ we have $\c_\gamma\leq\c$, and $z_\gamma\leq\c\in\Rad_{\B_2}(\Ss)$.

\item Let 
\begin{multline*}
\varphi\colon \forall x_1\forall x_2\ldots\forall x_n (\t_1(X)=\s_1(X))\wedge(\t_2(X)=\s_2(X))\wedge\ldots
\\ \wedge(\t_m(X)=\s_m(X))\to(\t(X)=\s(X))
\end{multline*}
be an arbitrary quasi-identity which is not true in boolean $\C$-algebra $\B_1$. It means that there exist elements $b_1,b_2,\ldots,b_n\in\B_1$ such that $\t_i(b_1,b_2,\ldots,b_n)=\s_i(b_1,b_2,\ldots,b_n)$ for all $1\leq i\leq m$, but $\t(b_1,b_2,\ldots,b_n)\neq\s(b_1,b_2,\ldots,b_n)$. Hence, the equation $\t(X)=\s(X)$ does not belong to the radical $\Rad_{\B_1}(\{\t_i(X)=\s_i(X)|1\leq i\leq m\})$. By the first statement of the theorem we have $\t(X)=\s(X)\notin\Rad_{\B_2}(\{\t_i(X)=\s_i(X)|1\leq i\leq m\})$, hence there exist elements $b_1^\prime,b_2^\prime,\ldots,b_n^\prime\in\B_2$ such that  $\t_i(b_1^\prime,b_2^\prime,\ldots,b_n^\prime)=\s_i(b_1^\prime,b_2^\prime,\ldots,b_n^\prime)$ for any $1\leq i\leq m$, but $\t(b_1^\prime,b_2^\prime,\ldots,b_n^\prime)\neq\s(b_1^\prime,b_2^\prime,\ldots,b_n^\prime)$. We obtain that the quasi-identity $\varphi$ is not true in $\B_2$. Thus, $\qvar(\B_1)=\qvar(\B_2)$. 

\item Consider a system of equations $\Ss$ of the form~(\ref{eq:S(Z)},\ref{eq:S(Z)_cond1},\ref{eq:S(Z)_cond2}). By Theorem~\ref{th:weakly_Noetherian_criterion}, the algebra $\C$ is complete in $\B_1,\B_2$. In other words, for the set of constants $\{\c_i|i\in I_\alpha\}$ there exists the infimum $\c_\alpha\in\C$. Hence, the system $\Ss_\alpha$ is equivalent over $\B_1,\B_2$ to the equation $z_\alpha\leq\c_\alpha$.

Thus, the system $\Ss$ is equivalent over the both algebras $\B_1,\B_2$ to a finite system $\Ss^\prime$
\[
\Ss^\prime=\bigcup_{\alpha}\{z_\alpha\leq\c_\alpha\}\cup\bigcup_{\substack{ \alpha\neq\beta}}\{z_\alpha z_\beta=0\}\cup\{\bigvee_{\alpha}z_\alpha=1\}.
\]

Following the first statement of this theorem, we obtain the equality   $\Rad_{\B_1}(\Ss^\prime)=\Rad_{\B_1}(\Ss^\prime)$, and the algebras $\B_1,\B_2$ are geometrically equivalent.

\item By the second statement of the theorem, we have the equality $\qvar(\B_1)=\qvar(\B_2)$. According Theorem~\ref{th:qvar=pvar}, $\pvar(\B_1)=\pvar(\B_2)$. Use Theorem~\ref{th:geom_equivalence_pvar} and obtain the geometric equivalence of boolean $\C$-algebras $\B_1,\B_2$.

\end{enumerate}
\end{proof}

\end{document}